
\documentclass{amsart}


\usepackage{amsmath,amsthm,amsfonts,amssymb,amscd,latexsym}
\usepackage[all]{xy}

\newtheorem{thm}{Theorem}[section]
\newtheorem{lem}[thm]{Lemma}
\newtheorem{pro}[thm]{Proposition}
\newtheorem{cor}[thm]{Corollary}

\numberwithin{equation}{section}

\newcommand{\ad}{\mathrm{ad}}

\newcommand{\Ker}{\mathrm{Ker}}
\newcommand{\im}{\mathrm{Im}}
\newcommand{\coker}{\mathrm{Coker}}
\newcommand{\Hom}{\mathrm{Hom}}

\newcommand{\Der}{\mathrm{Der}}
\newcommand{\ind}{\mathrm{Ind}}
\newcommand{\mt}{\mathrm{MT}}
\newcommand{\irr}{\mathrm{Irr}}
\newcommand{\gr}{\mathrm{gr}}
\newcommand{\dx}{\mathrm{dx}}
\newcommand{\dy}{\mathrm{dy}}

\newcommand{\F}{\mathbb{F}}
\newcommand{\Z}{\mathbb{Z}}

\newcommand{\boD}{\mathbf{D}}
\newcommand{\boY}{\mathbf{Y}}
\newcommand{\boX}{\mathbf{X}}

\newcommand{\caP}{\mathcal{P}}
\newcommand{\caF}{\mathcal{F}}
\newcommand{\caI}{\mathcal{I}}
\newcommand{\caD}{\mathcal{D}}
\newcommand{\caO}{\mathcal{O}}

\newcommand{\fW}{\mathfrak{W}}
\newcommand{\fT}{\mathfrak{T}}
\newcommand{\fB}{\mathfrak{B}}

\newcommand{\fsl}{\mathfrak{sl}}
\newcommand{\fF}{\mathfrak{F}}
\newcommand{\fu}{\mathfrak{u}}

\newcommand{\der}{\partial}
\newcommand{\uone}{\underline{1}}
\newcommand{\duone}{\underline{\underline{1}}}


\begin{document}


\title[Restricted Lie algebras with maximal $0$-PIM]{Restricted Lie algebras with maximal $0$-PIM}

\author{J\"org Feldvoss}
\address{Department of Mathematics and Statistics, University of South Alabama,
Mobile, AL 36688--0002, USA}
\email{jfeldvoss@southalabama.edu}

\author{Salvatore Siciliano}
\address{Dipartimento di Matematica e Fisica ``Ennio de Giorgi", Universit\`a del
Salento, Via Provinciale Lecce-Arnesano, I-73100 Lecce, Italy}
\email{salvatore.siciliano@unisalento.it}

\author{Thomas Weigel}
\address{Dipartimento di Matematica e Applicazioni, Universit\`a degli Studi di
Milano-Bicocca, Via Roberto Cozzi, No.\ 53, I-20125 Milano, Italy}
\email{thomas.weigel@unimib.it}

\subjclass[2010]{17B05, 17B30, 17B50}

\keywords{Restricted Lie algebra, $p$-character, reduced universal enveloping
algebra, projective cover, projective indecomposable module, induced module,
maximal $0$-PIM, torus, solvable Lie algebra, number of irreducible modules}


\begin{abstract}
In this paper we show that the projective cover of the trivial irreducible
module of a finite-dimensional solvable restricted Lie algebra is induced
from the one-dimensional trivial module of a maximal torus. As a consequence,
we obtain that the number of the isomorphism classes of irreducible modules
with a fixed $p$-character for a finite-dimensional solvable restricted Lie
algebra $L$ is bounded above by $p^{\mt(L)}$, where $\mt(L)$ denotes
the largest dimension of a torus in $L$. Finally, we prove that in characteristic
$p>3$ the projective cover of the trivial irreducible $L$-module is only induced
from the one-dimensional trivial module of a torus of maximal dimension if
$L$ is solvable.
\end{abstract}

      
\date{July 7, 2014}
          
\maketitle


\section*{Introduction} 


For a fixed prime number $p$ Wolfgang Willems considered the class $\caP_0(p)$ of
all finite groups $G$ for which the dimension of the projective cover of the one-dimensional
trivial $\F G$-module over a field $\F$ of characteristic $p$ has minimal dimension
and compared $\caP_0(p)$ to the class of all finite groups having a $p$-complement.
In particular, every $p$-solvable group belongs to $\caP_0(p)$, but the converse is
not true (see \cite[Section~4]{Wil}). More recently, Gunter Malle and the third author of
this paper classified all finite non-abelian simple groups belonging to $\caP_0(p)$
for a fixed prime number $p$ by using the classification of finite simple groups (see
\cite[Theorem A]{MW}). As a consequence, they obtained that a finite group $G$
is solvable (i.e., $G$ is $p$-solvable for every prime number $p$) if, and only if,
$G\in\caP_0(p)$ for every prime number $p$ (see \cite[Corollary~B]{MW}).

In this paper we investigate the analogous question for finite-dimensional restricted
Lie algebras over a field of prime characteristic. It turns out that for a restricted Lie
algebra there is a canonical upper bound for the dimension of the projective cover
of its one-dimensional trivial module (see Proposition \ref{upbd}). We say that a
finite-dimensional restricted Lie algebra has maximal $0$-PIM if this maximal possible
dimension is attained (see Section 2). One main goal of the paper is then to classify
all finite-dimensional restricted Lie algebras having maximal $0$-PIM. We prove that
a finite-dimensional restricted Lie algebra over a field of characteristic $p>3$ has
maximal $0$-PIM if, and only if, it is solvable (see Theorem \ref{max0PIM}). A
main ingredient of the proof is the classification of finite-dimensional simple Lie algebras
of absolute toral rank two over an algebraically closed field of characteristic $p>3$
(see \cite{StrII}). We show by an example that in characteristic two there exists a
finite-dimensional non-solvable restricted Lie algebra having maximal $0$-PIM. In
characteristic three we do not know of such a counterexample, but according to
the lack of a classification of finite-dimensional simple Lie algebras of absolute toral
rank two in this case, at the moment it is not clear whether our result holds in characteristic
three.

In the first section we collect several useful results for projective covers of modules
over reduced enveloping algebras that will be needed later in the paper. Some of these
results were already known in special cases, but for the convenience of the reader we
treat them here in one place. The most important one is Theorem \ref{fong}, which is
the Lie-theoretic analogue of a result for finite modular group algebras due to Wolfgang
Willems (see \cite[Lemma 2.6]{Wil} or also \cite[Lemma VII.14.2]{HB}), and will be crucial
in the proof of the main result of the next section, namely, that every finite-dimensional
solvable restricted Lie algebra has maximal $0$-PIM (see Theorem \ref{solvmax0PIM}).
In Section 3 this result is employed to establish an upper bound for the number of the isomorphism
classes of irreducible modules with a fixed $p$-character for a solvable restricted Lie algebra
(see Theorem \ref{numirr}). This generalizes the known results for nilpotent (see \cite[Satz
6]{Str1}) and supersolvable restricted Lie algebras (see \cite[Theorem 4]{Fe5}). For quite some
time the first author has conjectured that this bound holds for {\em every\/} finite-dimensional
restricted Lie algebra (see also \cite[Section 10, Conjecture]{Hum} for simple Lie algebras
of classical type). In the fourth section we prove a general result on the compatibility of
induction for a filtered restricted Lie algebra of depth one with the associated graded restricted
Lie algebra which might be of general interest. In particular, the irreducibility of the induced
module for the associated graded restricted Lie algebra implies the irreducibility of the
corresponding induced module for the filtered restricted Lie algebra (see Theorem \ref{grind}). 
Section 5 discusses some non-graded Hamiltonian Lie algebras and their representations.
Here the irreducibility of certain induced modules is obtained from Theorem \ref{grind} and
the known corresponding result for the associated graded restricted Lie algebra (see
Theorem \ref{irr}). The last section is devoted to a proof of the converse of Theorem
\ref{solvmax0PIM} in characteristic $p>3$ and a counterexample to this result in characteristic
two. In characteristic three this converse seems to be open, and we hope to come back
to this on another occasion.

In the following we briefly recall some of the notation that will be used in this paper. By
$\Z_{\ge 0}$ we will denote the set of all non-negative integers. Let $\langle X\rangle_{\F}$
denote the $\F$-subspace of a vector space $V$ over a field $\F$ spanned by a subset
$X$ of $V$. For a subset $X$ of a restricted Lie algebra $L$ we denote by $\langle X
\rangle_p$ the $p$-subalgebra of $L$ generated by $X$. Finally, $[L,L]$ or $L^{(1)}$
will denote the derived subalgebra of a Lie algebra $L$. For more notation and some
well-known results from the structure and representation theory of modular Lie algebras
we refer the reader to Chapters 1 -- 5 in \cite{SF} as well as Chapters 1, 4, and
7 in \cite{StrI}.


\section{Projective covers of modules with a $p$-character}


Let $A$ be a finite-dimensional unital associative algebra, and let $M$ be
a (unital left) $A$-module. Recall that a projective module $P_A(M)$ is a
{\it projective cover\/} of $M$, if there exists an $A$-module epimorphism
$\pi_M$ from $P_A(M)$ onto $M$ such that the kernel of $\pi_M$ is contained
in the radical of $P_A(M)$. If projective covers exist, then they are unique
up to isomorphism (see also the remark after (PC) below). It is well known
that projective covers of finite-dimensional modules over finite-dimensional
associative algebras always exist and are again finite-dimensional. Moreover,
every projective indecomposable $A$-module is isomorphic to the projective
cover of its irreducible head. In this way one obtains a bijection between the
isomorphism classes of the projective indecomposable $A$-modules and the
isomorphism classes of the irreducible $A$-modules.

In the sequel we will need the following universal property of the pair $(P_A
(M),\pi_M)$ which is an immediate consequence of Nakayama's lemma:
\begin{itemize}
\item[(PC)]
If $P$ is a projective $A$-module and $\pi$ is an $A$-module epimorphism
from $P$ onto $M$, then every $A$-module homomorphism $\eta$ from $P$
into $P_A(M)$ with $\pi_M\circ\eta = \pi$ is an epimorphism.
\end{itemize}
\vspace{.25cm}

\noindent {\bf Remark.} Since $P$ is projective and $\pi_M$ is an epimorphism,
there always exists an $A$-module homomorphism $\eta$ from $P$ to $P_A(M)$
such that $\pi_M\circ\eta = \pi$. In particular, it follows from (PC) that projective
covers are unique up to isomorphism.
\vspace{.3cm}

Let $L$ be a finite-dimensional restricted Lie algebra over a field of prime
characteristic $p$, and let $\chi$ be a linear form on $L$. Moreover, let $u(L,\chi)$
denote the $\chi$-reduced universal enveloping algebra of $L$ (see \cite[\S1.3]{VK}
or \cite[p.\ 212]{SF}), and let $P_L(M):=P_{u(L,\chi)} (M)$ denote the projective
cover of the $u(L,\chi)$-module $M$. If $H$ is a $p$-subalgebra of $L$, and $V$
is an $H$-module with $p$-character $\chi_{\vert H}$, then we set $\ind_H^L(V,\chi)
:=u(L,\chi)\otimes_{u(H,\chi_{\vert H})}V$ (see \cite[\S1.3]{VK} or \cite[Section
5.6]{SF} for the usual properties of induction).

As we could not find a reference for our first result, which will be needed in the
proof of Theorem \ref{max0PIM}, it is included here for completeness.

\begin{pro}\label{ind}
Let $L$ be a finite-dimensional restricted Lie algebra over a field of prime characteristic
$p$, let $\chi$ be a linear form on $L$, let $H$ be a $p$-subalgebra of $L$, and
let $M$ be a finite-dimensional $L$-module with $p$-character $\chi$. Then $P_L
(M)$ is a direct summand of $\ind_H^L(P_H(M_{\vert H}),\chi)$.
\end{pro}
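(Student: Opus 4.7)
The plan is to construct an epimorphism from $\ind_H^L(P_H(M_{\vert H}),\chi)$ onto $M$ and then invoke the universal property (PC) together with projectivity of $P_L(M)$ to extract $P_L(M)$ as a direct summand.

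First I would observe that $\ind_H^L(-,\chi)$ is an exact functor that preserves projectivity. Exactness follows from the fact that $u(L,\chi)$ is free (hence flat) as a right $u(H,\chi_{\vert H})$-module via a PBW argument: choose a basis of $L$ extending a basis of $H$, then the reduced PBW monomials built from the complementary basis elements give a free right $u(H,\chi_{\vert H})$-basis of $u(L,\chi)$. Preservation of projectivity then follows because $\ind_H^L(-,\chi)$ is left adjoint to the (exact) restriction functor, so it sends projectives to projectives. In particular, $\ind_H^L(P_H(M_{\vert H}),\chi)$ is a finite-dimensional projective $u(L,\chi)$-module.

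Next I would build the required epimorphism. Applying $\ind_H^L(-,\chi)$ to the projective cover map $\pi_{M_{\vert H}}\colon P_H(M_{\vert H})\to M_{\vert H}$ yields an epimorphism of $u(L,\chi)$-modules
\[
\ind_H^L(\pi_{M_{\vert H}},\chi)\colon \ind_H^L(P_H(M_{\vert H}),\chi)\longrightarrow \ind_H^L(M_{\vert H},\chi).
\]
Composing this with the canonical adjunction epimorphism $\ind_H^L(M_{\vert H},\chi)\to M$, $u\otimes m\mapsto u\cdot m$ (which is surjective since $1\otimes m\mapsto m$), I obtain an $L$-module epimorphism $\pi\colon \ind_H^L(P_H(M_{\vert H}),\chi)\to M$.

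Finally, since $\ind_H^L(P_H(M_{\vert H}),\chi)$ is projective and $\pi_M\colon P_L(M)\to M$ is surjective, the remark after (PC) produces an $L$-module homomorphism $\eta\colon \ind_H^L(P_H(M_{\vert H}),\chi)\to P_L(M)$ with $\pi_M\circ\eta=\pi$; property (PC) then forces $\eta$ to be an epimorphism. Because $P_L(M)$ is projective, this epimorphism splits, exhibiting $P_L(M)$ as a direct summand of $\ind_H^L(P_H(M_{\vert H}),\chi)$. There is no real obstacle here beyond the bookkeeping; the one point worth emphasizing is that $p$-characters match up correctly on both sides, which is ensured by the hypothesis that the $p$-character of $M_{\vert H}$ is the restriction $\chi_{\vert H}$ so that the induction $\ind_H^L(-,\chi)$ makes sense on $H$-modules in the relevant $p$-character block.
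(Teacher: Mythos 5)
Your proof is correct and follows essentially the same route as the paper: the paper likewise notes that induction preserves projectivity, obtains the epimorphism $\ind_H^L(P_H(M_{\vert H}),\chi)\twoheadrightarrow M$ from the universal property of induced modules (your explicit counit construction is exactly this), and then applies (PC) together with projectivity of $P_L(M)$ to split off the direct summand. No gaps.
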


\begin{proof}
Set $P:=\ind_H^L(P_H(M_{\vert H}),\chi)$. Since induction preserves projectivity,
$P$ is a projective $u(L,\chi)$-module. As $P_H(M_{\vert H})$ is the projective
cover of $M_{\vert H}$, there is an $H$-module epimorphism $\pi$ from $P_H
(M_{\vert H})$ onto $M_{\vert H}$. According to \cite[Theorem~5.6.3]{SF},
there exists an $L$-module epimorphism from $P$ onto $M$. Then we obtain
from (PC) and the subsequent remark that $P_L(M)$ is a homomorphic image of
$P$, and thus also a direct summand of $P$.
\end{proof}

Let $$\mt(L):=\max\{\dim_\F T\mid T\mbox{ is a torus of }L\}$$ denote the
{\em largest dimension\/} of a torus in $L$ (see \cite[Notation 1.2.5]{StrI}).
The next result is a consequence of Proposition \ref{ind} and the semisimplicity
of reduced universal enveloping algebras of tori.

\begin{pro}\label{upbd}
Let $L$ be a finite-dimensional restricted Lie algebra over a field $\F$ of prime
characteristic $p$, let $\chi$ be a linear form on $L$, and let $M$ be a
finite-dimensional $L$-module with $p$-character $\chi$. Then $P_L(M)$
is a direct summand of $\ind_T^L(M_{\vert T},\chi)$ for any torus $T$ of $L$.
In particular, $$\dim_\F P_L(M)\le(\dim_\F M)\cdot p^{\dim_\F L-\mt(L)}\,.$$
\end{pro}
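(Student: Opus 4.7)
The plan is to reduce the first assertion to Proposition~\ref{ind} applied with $H:=T$. For this to give exactly the stated conclusion, what is needed is that the projective cover $P_T(M_{\vert T})$ is just $M_{\vert T}$ itself; once this is in hand, Proposition~\ref{ind} immediately yields that $P_L(M)$ is a direct summand of $\ind_T^L(M_{\vert T},\chi)$.

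To establish this, I would first recall the standard fact that for a torus $T$ and any restricted linear form $\chi_{\vert T}$, the reduced universal enveloping algebra $u(T,\chi_{\vert T})$ is a semisimple commutative associative algebra (cf.\ \cite[Chapter 2]{SF}). Consequently every $u(T,\chi_{\vert T})$-module is projective, and in particular $M_{\vert T}$ is projective. The identity map $M_{\vert T}\to M_{\vert T}$ is then an epimorphism from a projective module whose kernel is $0$, which is automatically contained in $\mathrm{rad}(M_{\vert T})$; this exhibits $M_{\vert T}$ as its own projective cover, so $P_T(M_{\vert T})\cong M_{\vert T}$. Applying Proposition~\ref{ind} gives the first claim.

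For the dimension estimate, I would invoke the PBW property of reduced universal enveloping algebras: $u(L,\chi)$ is free of rank $p^{\dim_\F L-\dim_\F T}$ as a right $u(T,\chi_{\vert T})$-module. Consequently
$$
\dim_\F\ind_T^L(M_{\vert T},\chi)=p^{\dim_\F L-\dim_\F T}\cdot\dim_\F M.
$$
Since $P_L(M)$ is a direct summand of this induced module, choosing $T$ to be a torus of maximal dimension, so that $\dim_\F T=\mt(L)$, produces the desired bound.

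There is no real obstacle here: the essential inputs are Proposition~\ref{ind}, the semisimplicity of $u(T,\chi_{\vert T})$ for a torus $T$, and a standard dimension count for induced modules over reduced enveloping algebras. The only point that requires a moment of care is remembering that projectivity of $M_{\vert T}$ is enough to force the projective cover to be $M_{\vert T}$ itself.
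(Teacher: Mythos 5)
Your proof is correct and follows essentially the same route as the paper: the paper likewise deduces the first claim from Proposition~\ref{ind} applied to $H=T$ together with the projectivity of $M_{\vert T}$ over $u(T,\chi_{\vert T})$ (cited there from the literature, but resting on the same semisimplicity of reduced enveloping algebras of tori that you invoke), and then obtains the dimension bound from the rank-$p^{\dim_\F L-\dim_\F T}$ freeness of $u(L,\chi)$ over $u(T,\chi_{\vert T})$ with $T$ of maximal dimension. Your explicit observation that a projective module is its own projective cover is exactly the point implicitly used in the paper.
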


\begin{proof}
Let $T$ be any torus of $L$. By virtue of \cite[Lemma 3.2]{Fe3}, $M_{\vert T}$
is a projective $u(T,\chi_{\vert T})$-module, and it follows from Proposition \ref{ind}
that $P_L(M)$ is a direct summand of $\ind_T^L(M_{\vert T},\chi)$.

Assume now in addition that $T $ is of maximal dimension in $L$, i.e., $\dim_\F
T=\mt(L)$. Then it follows from the first part and \cite[Proposition 5.6.2]{SF}
that $$\dim_\F P_L(M)\le\dim_\F\ind_T^L(M_{\vert T},\chi)=(\dim_\F M)\cdot
p^{\dim_\F L/T}=(\dim_\F M)\cdot p^{\dim_\F L-\mt(L)}\,.$$
\end{proof}

For later use we record the following result, which is a consequence of the first
part of Proposition \ref{upbd} and \cite[Proposition 5.6.2]{SF}:

\begin{cor}\label{proj}
Let $L$ be a finite-dimensional restricted Lie algebra over a field $\F$ of prime
characteristic $p$, and let $\chi$ be a linear form on $L$. If $M$ is a finite-dimensional
$L$-module with $p$-character $\chi$ such that $$\dim_\F P_L(M)=(\dim_\F M)
\cdot p^{\dim_\F L-\mt(L)}\,,$$ then $P_L(M)\cong\ind_T^L(M_{\vert T},\chi)$
holds for any torus $T$ of $L$ of maximal dimension.
\end{cor}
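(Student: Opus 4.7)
The plan is to apply Proposition \ref{upbd} directly and then match dimensions, so no additional machinery is required. Fix a torus $T$ of $L$ with $\dim_\F T=\mt(L)$. By the first assertion of Proposition \ref{upbd}, the induced module $\ind_T^L(M_{\vert T},\chi)$ decomposes as a direct sum $P_L(M)\oplus Q$ of $u(L,\chi)$-modules for some complementary summand $Q$ (which a priori could be nonzero).

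The next step is to compute the dimension of the induced module using the standard PBW-type formula \cite[Proposition 5.6.2]{SF}, giving
$$\dim_\F \ind_T^L(M_{\vert T},\chi)=(\dim_\F M)\cdot p^{\dim_\F L/T}=(\dim_\F M)\cdot p^{\dim_\F L-\mt(L)}.$$
By the hypothesis, this quantity equals $\dim_\F P_L(M)$. Since $P_L(M)$ occurs as a direct summand of $\ind_T^L(M_{\vert T},\chi)$ of the same $\F$-dimension, the complement $Q$ must vanish, and one concludes $P_L(M)\cong\ind_T^L(M_{\vert T},\chi)$.

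There is no real obstacle in the argument, as this is a dimension-counting corollary of Proposition \ref{upbd}: the only subtlety is noting that the torus $T$ in play is assumed to be of maximal dimension, so that the bound in Proposition \ref{upbd} is saturated precisely when the direct summand exhausts the induced module.
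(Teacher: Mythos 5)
Your argument is correct and is exactly the one the paper intends: the corollary is stated as "a consequence of the first part of Proposition \ref{upbd} and \cite[Proposition 5.6.2]{SF}", i.e., $P_L(M)$ is a direct summand of $\ind_T^L(M_{\vert T},\chi)$, whose dimension $(\dim_\F M)\cdot p^{\dim_\F L-\mt(L)}$ equals that of $P_L(M)$ by hypothesis, forcing the complement to vanish. Nothing further is needed.
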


The argument of our next result, which will be needed in Section 3,  is the same
as in the proof of \cite[Theorem 2]{Fe2}, but for the convenience of the reader
it is included here. We will denote the trivial irreducible $L$-module of a restricted
Lie algebra $L$ over a field $\F$ also just by $\F$.

\begin{pro}\label{reciproc}
Let $L$ be a finite-dimensional restricted Lie algebra over a field $\F$ of prime
characteristic $p$, let $\chi$ be a linear form on $L$, and let $M$ be a non-zero
finite-dimensional $L$-module with $p$-character $\chi$. Then $P_L(\F)$ is a direct
summand of $P_L(M)\otimes M^*$.
\end{pro}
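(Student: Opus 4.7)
The plan is to identify $P_L(\F)$ as a direct summand of $P_L(M)\otimes M^*$ by showing that the latter is a projective $u(L,0)$-module admitting an $L$-module epimorphism onto $\F$; property (PC) together with projectivity of $P_L(\F)$ then completes the argument.

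First, I would verify that $P_L(M)\otimes M^*$ is a $u(L,0)$-module. The dual $M^*$ carries $p$-character $-\chi$, which follows by a direct computation using the transpose action of $x$ on $M^*$. Since $x\otimes 1$ and $1\otimes x$ commute as endomorphisms of the tensor product, the identity $(x\otimes 1+1\otimes x)^p=x^p\otimes 1+1\otimes x^p$ holds, so $P_L(M)\otimes M^*$ with the diagonal action has $p$-character $\chi+(-\chi)=0$.

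The main technical step is to show that $P_L(M)\otimes M^*$ is projective over $u(L,0)$. Writing $P_L(M)$ as a direct summand of some free module $u(L,\chi)^n$, it suffices to prove that $u(L,\chi)\otimes M^*$ with diagonal $L$-action is free over $u(L,0)$. This is a standard feature of reduced enveloping algebras: tensoring a projective $u(L,\chi)$-module with a finite-dimensional $L$-module of $p$-character $\psi$ yields a projective $u(L,\chi+\psi)$-module (see the material in \cite[Chapter 5]{SF}). This is where I anticipate the only real work; once this is in hand, the rest of the argument is purely formal.

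Finally, I would construct the desired surjection $P_L(M)\otimes M^*\to \F$ by composing $\pi_M\otimes \id_{M^*}$ with the evaluation pairing $M\otimes M^*\to\F$, $m\otimes f\mapsto f(m)$. The pairing is $L$-equivariant with respect to the trivial action on $\F$, since $f(x\cdot m)+(x\cdot f)(m)=f(x\cdot m)-f(x\cdot m)=0$ for all $x\in L$, and it is nonzero because $M\neq 0$; hence the composition is surjective by the irreducibility of $\F$. Property (PC) then produces an $L$-module epimorphism $\eta\colon P_L(M)\otimes M^*\to P_L(\F)$, and since $P_L(\F)$ is projective, $\eta$ splits. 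Thus $P_L(\F)$ appears as a direct summand of $P_L(M)\otimes M^*$.
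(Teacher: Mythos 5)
Your proof is correct and follows essentially the same route as the paper: establish that $P_L(M)\otimes M^*$ is a projective $u(L,0)$-module (the paper cites \cite[Theorem 5.2.7(2)]{SF} and \cite[Lemma 2.3]{Fe1} for exactly the tensor-with-free-module fact you sketch), map it onto $\F$ via $\pi_M\otimes\id_{M^*}$ followed by the evaluation pairing, and then invoke (PC) and projectivity of $P_L(\F)$ to split off the summand. The extra details you supply (the $p$-character bookkeeping and the equivariance check for evaluation) are accurate but do not change the argument.
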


\begin{proof}
Set $P:=P_L(M)\otimes M^*$. According to \cite[Theorem 5.2.7(2)]{SF} and
\cite[Lemma~2.3]{Fe1}, $P$ is a projective $u(L,0)$-module. As $P_L(M)$ is the
projective cover of $M$, there is an $L$-module epimorphism $\pi_M$ from $P_L
(M)$ onto $M$. Tensoring $\pi_M$ with the identity transformation of $M^*$
yields an $L$-module epimorphism from $P$ onto $M\otimes M^*$ which in turn
has $\F$ as an epimorphic image. Now we can proceed as in the proof of Proposition
\ref{ind}.
\end{proof}

The following dimension formula is the Lie-theoretic analogue of a result for finite
modular group algebras due to Wolfgang Willems (see \cite[Lemma 2.6]{Wil} or
\cite[Lemma VII.14.2]{HB}), and will be crucial for the proof of our first main result
in the next section.

\begin{thm}\label{fong}
Let $L$ be a finite-dimensional restricted Lie algebra over a field $\F$ of prime
characteristic $p$, let $\chi$ be a linear form on $L$, and let $I$ be a $p$-ideal
of $L$. Then $$\dim_\F P_L(S)=\dim_\F P_I(\F)\cdot\dim_\F P_{L/I}(S)$$ holds
for every irreducible $L$-module $S$ with $p$-character $\chi$ such that $I\cdot
S=0$.
\end{thm}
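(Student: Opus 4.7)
Since $I\cdot S=0$ and $S$ has $p$-character $\chi$, the defining relation $x^p\cdot s=x^{[p]}\cdot s+\chi(x)^p\cdot s$ for $x\in I$ and $s\in S$ forces $\chi_{\vert I}=0$. Hence $\chi$ descends to a linear form $\bar\chi$ on $L/I$ and $S$ becomes an irreducible $u(L/I,\bar\chi)$-module. The plan is to establish the desired identity as a two-sided dimension estimate.

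For the inequality $\dim_\F P_L(S)\ge\dim_\F P_I(\F)\cdot\dim_\F P_{L/I}(S)$, I first note that $Iu(L,\chi)$ is a two-sided ideal of $u(L,\chi)$ (since $[L,I]\subseteq I$ implies $u(L,\chi)\cdot I=I\cdot u(L,\chi)$), and that the canonical map induces an isomorphism $u(L,\chi)/Iu(L,\chi)\cong u(L/I,\bar\chi)$. Consequently $P_L(S)/IP_L(S)$ is a projective $u(L/I,\bar\chi)$-module whose only irreducible $L/I$-quotient is $S$ (inherited from the $L$-head of $P_L(S)$), so $P_L(S)/IP_L(S)\cong P_{L/I}(S)$. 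Since $u(L,\chi)$ is free over $u(I,0)$ by PBW, the restriction $P_L(S)_{\vert I}$ is a projective $u(I,0)$-module and decomposes as $\bigoplus_{T'}P_I(T')^{n_{T'}}$ over the irreducible $u(I,0)$-modules $T'$. Taking $I$-coinvariants and using the identity $\dim_\F(P_I(T'))_I=\dim_\F\Hom_I(P_I(T'),\F)=\delta_{T',\F}$ yields $n_\F=\dim_\F P_{L/I}(S)$, whence
$$\dim_\F P_L(S)\ge n_\F\cdot\dim_\F P_I(\F)=\dim_\F P_I(\F)\cdot\dim_\F P_{L/I}(S).$$

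For the matching upper bound, I compute $\dim_\F\ind_I^L(P_I(\F),\chi)$ in two different ways. Directly, \cite[Proposition~5.6.2]{SF} gives $p^{\dim_\F L/I}\cdot\dim_\F P_I(\F)$. On the other hand, $\ind_I^L(P_I(\F),\chi)$ is a projective $u(L,\chi)$-module and thus splits as $\bigoplus_T P_L(T)^{m_T}$ over the irreducible $L$-modules $T$ with $p$-character $\chi$. Frobenius reciprocity identifies $\Hom_L(\ind_I^L(P_I(\F),\chi),T)$ with $\Hom_I(P_I(\F),T_{\vert I})$, whose dimension equals the multiplicity of the trivial $I$-module as an $I$-composition factor of $T_{\vert I}$. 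By Clifford's theorem for restricted Lie algebras, $T_{\vert I}$ is semisimple with irreducible constituents forming a single $L$-orbit; since the $L$-orbit of the trivial $I$-module is a singleton, this multiplicity is positive precisely when $I\cdot T=0$, in which case $m_T=\dim_\F T/\dim_\F\End_L(T)$. The same multiplicities appear in the regular-module decomposition $u(L/I,\bar\chi)\cong\bigoplus_{T:I\cdot T=0}P_{L/I}(T)^{m_T}$, because $\End_{L/I}(T)=\End_L(T)$ whenever $I\cdot T=0$. Equating the two expressions for $\dim_\F\ind_I^L(P_I(\F),\chi)$ produces
$$\sum_{T:I\cdot T=0}m_T\dim_\F P_L(T)=\dim_\F P_I(\F)\cdot\sum_{T:I\cdot T=0}m_T\dim_\F P_{L/I}(T).$$
The termwise inequality from the previous paragraph applies to every such $T$, so together with the strict positivity of each $m_T$ this forces termwise equality and in particular the claimed formula for $S$.

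The main obstacle I anticipate is the Clifford-type input identifying when the trivial $I$-module appears as an $I$-composition factor of $T_{\vert I}$; the remaining steps reduce to routine bookkeeping with PBW-freeness, Frobenius reciprocity, and the duality $\dim_\F(M)_I=\dim_\F\Hom_I(M,\F)$ for computing $I$-coinvariants of projective covers.
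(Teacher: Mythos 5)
Your overall strategy -- a two-sided dimension estimate -- is sound and genuinely different from the paper's argument. The lower bound is essentially the second half of the paper's proof: the identification $P_L(S)/IP_L(S)\cong P_{L/I}(S)$ via coinvariants, combined with the $u(I,0)$-projectivity of $P_L(S)_{\vert I}$ and the formula $\dim_\F(M)_I=\dim_\F\Hom_I(M,\F)$, correctly yields $\dim_\F P_L(S)\ge\dim_\F P_I(\F)\cdot\dim_\F P_{L/I}(S)$. Where you diverge is the upper bound: the paper instead computes the restriction $\ind_I^L(P_I(\F),\chi)_{\vert I}\cong P_I(\F)^{\oplus p^{\dim_\F L/I}}$ exactly, using the generalized Cartan--Weyl identity, exactness of $\ind_I^L(-,\chi)_{\vert I}$ and (PC), and then invokes Krull--Remak--Schmidt to conclude $P_L(S)_{\vert I}\cong P_I(\F)^{\oplus e}$ on the nose; your global count of $\dim_\F\ind_I^L(P_I(\F),\chi)$ against the regular module of $u(L/I,\bar\chi)$ is a legitimate substitute and buys you a proof that never needs to identify the indecomposable summands of the restriction.

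The genuine gap is the step you yourself flagged: there is no ``Clifford's theorem for restricted Lie algebras'' asserting that $T_{\vert I}$ is semisimple with constituents forming a single $L$-orbit. This fails already for $\chi=0$: take $L=\F t\oplus\F s\oplus\F x$ with $t,s$ toral, $[t,x]=x$, $s=x^{[p]}$ central, and $I=\F s\oplus\F x$; for $\sigma\in\F_p^\times$ the $p$-dimensional irreducible restricted $L$-module on which $s$ acts by $\sigma$ restricts to $I$ as the uniserial module $\F[x]/((x-\sigma)^p)$, which is indecomposable of length $p$. (Moreover ``$L$-orbit'' of irreducible $I$-modules is not a defined notion here -- there is no group acting.) The implication you actually need, namely $[T_{\vert I}:\F]>0\Rightarrow I\cdot T=0$, is nevertheless true, but its proof is exactly the paper's key computation: if $\Hom_I(P_I(\F),T_{\vert I})\neq 0$ then $T$ is a quotient of $Q:=\ind_I^L(P_I(\F),\chi)$, and since $Q_{\vert I}\cong P_I(\F)^{\oplus p^{\dim_\F L/I}}$ (generalized Cartan--Weyl identity), the $I$-head of $T_{\vert I}$ is trivial, so $IT=\fu(I)T\subsetneq T$, and $IT$ being an $L$-submodule forces $IT=0$. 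Alternatively, you can bypass the issue entirely: writing $\dim_\F\ind_I^L(P_I(\F),\chi)=\sum_T m_T\dim_\F P_L(T)$ over \emph{all} irreducibles with $p$-character $\chi$, restricting the sum to $\{T:IT=0\}$ only weakens the equality to ``$\ge$'', and the resulting sandwich
\begin{equation*}
\dim_\F P_I(\F)\sum_{IT=0}m_T\dim_\F P_{L/I}(T)=\dim_\F\ind_I^L(P_I(\F),\chi)\ge\sum_{IT=0}m_T\dim_\F P_L(T)\ge\dim_\F P_I(\F)\sum_{IT=0}m_T\dim_\F P_{L/I}(T)
\end{equation*}
still forces termwise equality (and, as a by-product, $m_T=0$ whenever $IT\neq 0$). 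With either repair your proof is complete; as written, the appeal to semisimplicity of $T_{\vert I}$ is an appeal to a false statement.
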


\begin{proof}
As both the restriction functor from $L$ to $I$ and the induction functor from
$I$ to $L$ are exact additive covariant functors that map projectives to projectives,
their composition $\caF:=\ind_I^L(-,\chi)_{\vert I}$ has the same properties. It
follows from the generalized Cartan-Weyl identity (see \cite[Lemma 5.7.1]{SF})
that $I$ annihilates the module $\caF(\F)$ which is therefore isomorphic to $\dim_\F
u(L/I,\chi)$ copies of the trivial irreducible $I$-module. The exactness of $\caF$
and (PC) implies that $\caF(P_I(\F))$ contains a direct summand that is isomorphic
to $\dim_\F u(L/I,\chi)$ copies of $P_I(\F)$. Since both modules have the same
dimension, they are isomorphic.

As $S$ is a trivial $I$-module, there exists a non-zero $I$-module homomorphism
from $P_I(\F)$ to $S_{\vert I}$, and one obtains from the universal property of
induced modules (see \cite[Theorem 5.6.3]{SF}) and the irreducibility of $S$ that
there exists an $L$-module epimorphism from $\ind_I^L(P_I(\F),\chi)$ onto $S$.
But the former module is projective and thus it follows from (PC) that $P_L(S)$ is
an epimorphic image of $\ind_I^L(P_I(\F),\chi)$. Since the former is also projective,
it is a direct summand of the latter. Then after restriction to $I$ the Krull-Remak-Schmidt
Theorem in conjunction with the isomorphism $\caF(P_I(\F))\cong P_I(\F)^{\oplus\dim_\F
u(L/I,\chi)}$ implies that $P_L(S)_{\vert I}\cong P_I(\F)^{\oplus e}$ for some positive
integer $e$. In particular, we obtain that $\dim_\F P_L(S)=e\cdot\dim_\F P_I(\F)$.

Let $\caD$ denote the left adjoint functor of the inflation functor $\caI$ from $L/I$
to $L$. It can be shown that $\caD$ is just the coinvariants functor so that $\caD
(M)=M/IM$ for every $u(L,\chi)$-module $M$. Since $\caI$ is obviously exact, it follows
that $\caD$ maps projectives to projectives (see \cite[Proposition 2.3.10]{Wei}). For
any irreducible $u(L/I,\chi)$-module $M$ one has
\begin{eqnarray*}
\Hom_{L/I}(\caD(P_L(S)),M) & \cong & \Hom_L(P_L(S),\caI(M))\cong\Hom_L(S,\caI(M))\\
& \cong & \Hom_{L/I}(S,M)\cong\Hom_{L/I}(P_{L/I}(S),M)\,.
\end{eqnarray*}
Hence (PC) implies that $\caD(P_L(S))$ and $P_{L/I}(S)$ are isomorphic. As $P_I(\F)/IP_I
(\F)$ is one-dimensional, one concludes from $P_L(S)_{\vert I}\cong P_I(\F)^{\oplus e}$ that
$$e=\dim_\F P_L(S)/IP_L(S)=\dim_\F\caD(P_L(S))=\dim_\F P_{L/I}(S)\,.$$ Consequently,
the assertion follows from the last equality of the previous paragraph.
\end{proof}

\noindent {\bf Remark.} The above proof would also work in the case of finite-dimensional
modular group algebras. In particular, this provides an alternative proof of Willems' result
\cite[Lemma 2.6]{Wil}.
\vspace{.3cm}


\section{The $0$-PIM of a solvable restricted Lie algebra}


Let $L$ be a finite-dimensional restricted Lie algebra over a field $\F$ of prime
characteristic. It follows from Proposition \ref{upbd} that
\begin{equation}
\label{eq:max0PIM}
\dim_\F P_L(\F)\le p^{\dim_\F L-\mt(L)}\,.
\end{equation}
We say that $L$ has {\em maximal $0$-PIM\/} if $\dim_\F P_L(\F)=p^{\dim_\F L-
\mt(L)}$. In this case it follows from Corollary \ref{proj} that $P_L(\F)\cong\ind_T^L
(\F,0)$ for any torus $T$ in $L$ of maximal dimension. Our goal in this section is to
prove that any finite-dimensional solvable restricted Lie algebra $L$ has maximal
$0$-PIM. In particular, the $0$-PIM is induced from the one-dimensional trivial module
of a torus of maximal dimension. The next result will be important in the induction
step of the proof of Theorem \ref{solvmax0PIM} and in the proof of Theorem
\ref{max0PIM} (see also \cite[Theorem VII.14.3]{HB} for the group-theoretic
analogue).

\begin{lem}\label{ext}
Let $L$ be a finite-dimensional restricted Lie algebra over a field of prime characteristic
$p$, and let $I $ be a $p$-ideal of $L$. Then $L$ has maximal $0$-PIM if, and only if,
$I$ and $L/I$ have maximal $0$-PIM.
\end{lem}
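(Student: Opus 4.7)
The plan is to apply Theorem \ref{fong} to the trivial irreducible $L$-module $\F$, which is automatically annihilated by $I$. This yields the multiplicative identity
\[
\dim_\F P_L(\F)=\dim_\F P_I(\F)\cdot\dim_\F P_{L/I}(\F),
\]
while Proposition \ref{upbd} (with $M=\F$) supplies the individual bounds $\dim_\F P_I(\F)\le p^{\dim_\F I-\mt(I)}$ and $\dim_\F P_{L/I}(\F)\le p^{\dim_\F L/I-\mt(L/I)}$, with equality exactly when $I$, respectively $L/I$, has maximal $0$-PIM.

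The crux is then the additivity $\mt(L)=\mt(I)+\mt(L/I)$. The inequality $\mt(L)\le\mt(I)+\mt(L/I)$ is immediate: for any maximal torus $T\subseteq L$, the intersection $T\cap I$ is a torus of $I$ and $(T+I)/I$ is a torus of $L/I$, whose dimensions sum to $\dim_\F T$. For the reverse inequality I would fix a maximal torus $T_I$ of $I$ and consider its adjoint action on $L$. Since $T_I\subseteq I$ is an ideal and $T_I$ acts semisimply, every non-zero weight space of $T_I$ on $L$ lies in $I$, so $L=C_L(T_I)+I$ and the canonical projection restricts to a surjection $C_L(T_I)\twoheadrightarrow L/I$ with kernel $C_I(T_I)$. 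I would then lift a toral basis $\bar t_1,\dots,\bar t_k$ of a maximal torus of $L/I$ to commuting toral elements $s_1,\dots,s_k$ of $C_L(T_I)$ inductively: after lifting $\bar t_1,\dots,\bar t_{i-1}$ to commuting toral elements, work inside the centralizer of $s_1,\dots,s_{i-1}$ (which still surjects onto the corresponding centralizer in $L/I$), choose any preimage of $\bar t_i$, extract its semisimple part via Jordan decomposition (its nilpotent part must lie in $I$), and finally pass to a toral element of the torus generated by this semisimple part that still projects to $\bar t_i$. The resulting $T_0=\langle s_1,\dots,s_k\rangle_\F$ is a torus of $C_L(T_I)$ with $T_0\cap I=0$, and hence $T_I\oplus T_0$ is a torus of $L$ of dimension $\mt(I)+\mt(L/I)$.

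With additivity in hand, $\dim_\F L-\mt(L)=(\dim_\F I-\mt(I))+(\dim_\F L/I-\mt(L/I))$, so the product bound coming from the Fong identity coincides with the direct bound $p^{\dim_\F L-\mt(L)}$ from Proposition \ref{upbd}. Equality therefore holds in the product exactly when each of $\dim_\F P_I(\F)$ and $\dim_\F P_{L/I}(\F)$ attains its individual maximum, which is the asserted equivalence.

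The principal obstacle I anticipate is the torus-lifting step: producing commuting toral lifts of a whole basis of a maximal torus of $L/I$ requires iterated passage through centralizers combined with Jordan decomposition, together with the final adjustment from a semisimple lift to a toral one performed compatibly with the commutation constraint. If preferred, this additivity of $\mt$ can simply be quoted from \cite{StrI}, which would shorten the argument considerably.
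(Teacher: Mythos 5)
Your argument is correct and is essentially the paper's proof: Theorem \ref{fong} applied to $S=\F$ gives the multiplicative identity, and squeezing against the bound \eqref{eq:max0PIM} via the additivity $\mt(L)=\mt(I)+\mt(L/I)$ yields both directions. The paper simply cites this additivity as \cite[Lemma 1.2.6(2)(a)]{StrI} rather than reproving it, exactly as you suggest in your closing remark.
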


\begin{proof}
Suppose first that $L$ has maximal $0$-PIM. Then it follows from Theorem \ref{fong}
and \cite[Lemma 1.2.6(2)(a)]{StrI} that
\begin{equation}
\label{eq:dim}
[\dim_\F P_I(\F)]\cdot[\dim_\F P_{L/I}(\F)]=p^{\dim_\F I-\mt(I)}\cdot p^{\dim_\F L/I-\mt(L/I)}\,.
\end{equation}
Suppose that $\dim_\F P_I(\F)<p^{\dim_\F I-\mt(I)}$. Then $\dim_\F P_{L/I}(\F)>
p^{\dim_\F L/I-\mt(L/I)}$, which contradicts \eqref{eq:max0PIM}. Hence $\dim_\F
P_I(\F)=p^{\dim_\F I-\mt(I)}$, and therefore also $\dim_\F P_{L/I}(\F)=p^{\dim_\F
L/I-\mt(L/I)}$. Consequently, $I$ and $L/I$ have maximal $0$-PIM. Finally, the
``if"-part is immediate from the equality in \eqref{eq:dim}, Theorem \ref{fong}, and
\cite[Lemma~1.2.6(2)(a)]{StrI}.
\end{proof}

Now we are ready to prove the first main result of this paper.

\begin{thm}\label{solvmax0PIM}
Every finite-dimensional solvable restricted Lie algebra over a field of prime
characteristic has maximal $0$-PIM.
\end{thm}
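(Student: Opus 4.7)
The plan is induction on $\dim_\F L$, with Lemma \ref{ext} as the principal reduction device.

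The base case $\dim_\F L \le 1$ I would dispatch by direct computation. A one-dimensional restricted Lie algebra $L$ is either a torus (so $u(L,0)$ is semisimple, $P_L(\F) = \F$, and $\mt(L) = \dim_\F L$) or $p$-nilpotent (so the augmentation ideal of $u(L,0)$ is nil and hence, in finite dimension, nilpotent, whence $u(L,0)$ is local, $P_L(\F) = u(L,0)$, and $\mt(L) = 0$); in both cases the desired equality $\dim_\F P_L(\F) = p^{\dim_\F L - \mt(L)}$ holds.

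For the inductive step, let $L$ be solvable with $\dim_\F L \ge 2$. If $L$ is itself a torus or $p$-nilpotent, the direct computation of the base case still applies. Otherwise, I would look for a proper non-trivial $p$-ideal $I$ of $L$; then both $I$ and $L/I$ are solvable of strictly smaller dimension, so by the inductive hypothesis both have maximal $0$-PIM, and Lemma \ref{ext} yields the conclusion.

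To exhibit such an $I$, I would split into the abelian and non-abelian sub-cases. In the abelian case, the $p$-power map is $p$-semilinear and yields a Fitting-type decomposition $L = L_s \oplus L_n$ into the toral part $L_s$ (on which $[p]$ is bijective) and the $p$-nilpotent part $L_n$ (on which $[p]$ is nilpotent); both are $p$-subalgebras of the abelian $L$ and hence $p$-ideals, and the standing assumption that $L$ is neither a torus nor $p$-nilpotent forces both summands to be non-trivial and proper. In the non-abelian case, take $A := L^{(m)}$, the last non-zero term of the derived series of $L$; then $A$ is an abelian Lie-ideal of $L$, and I would set $I := \langle A\rangle_p$. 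The identity $\ad(x^{[p]^k}) = (\ad x)^{p^k}$, together with the facts that $\ad x(L) \subseteq A$ (since $A$ is an $L$-ideal) and $\ad x(A) = 0$ (abelianness of $A$) for every $x \in A$, shows that $(\ad x)^n$ annihilates $L$ for $n \ge 2$; from this it follows both that $I$ is an ideal of $L$ and that $I$ is abelian. Since $L$ is non-abelian but $I$ is abelian, $I \subsetneq L$, and $A \subseteq I$ forces $I \ne 0$.

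The main obstacle is the non-abelian case: the Lie-theoretic ideal $A$ supplied by solvability is typically not itself a $p$-ideal, so one must upgrade it to a $p$-ideal without filling up all of $L$. The $p$-envelope construction does exactly this, and the abelianness of $A$ together with its $L$-invariance keeps $\langle A\rangle_p$ abelian, hence strictly inside the non-abelian $L$, which is precisely what makes the induction go through.
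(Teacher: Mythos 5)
Your proof has the same skeleton as the paper's: induction on $\dim_\F L$, with Lemma \ref{ext} doing the reduction and the inductive step resting on the existence of a non-zero proper $p$-ideal. Your non-abelian case is correct and only marginally different from the paper's: the paper takes $I=\langle[L,L]\rangle_p$ and deduces properness from $[\langle[L,L]\rangle_p,\langle[L,L]\rangle_p]=[[L,L],[L,L]]$ together with solvability, while you take the $p$-envelope of the last non-zero term of the derived series and deduce properness from its abelianness; both arguments go through. Your direct computations for tori (semisimplicity of $u(T,0)$) and for $p$-nilpotent algebras (locality of $u(L,0)$) are also fine; the paper instead disposes of the entire abelian case by quoting \cite[Satz II.3.2]{Fe0}.

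The one genuine gap is the abelian case over a non-perfect field. The decomposition $L=L_s\oplus L_n$ of an abelian restricted Lie algebra into a toral and a $p$-nilpotent part is a perfect-field statement (this is where the hypotheses of \cite[Theorems 2.3.5 and 2.3.6]{SF} enter); over a non-perfect field it can fail. For instance, let $\F=\F_p(t)$ with $t\notin\F^p$, and let $L=\F x\oplus\F z$ be abelian with $x^{[p]}=z$ and $z^{[p]}=tz$. Then $(ax+bz)^{[p]}=(a^p+b^pt)z$, and $a^p+b^pt=0$ forces $a=b=0$, so the only $p$-nilpotent element of $L$ is $0$; on the other hand $x\notin\langle x^{[p]}\rangle_p=\F z$, so $L$ is not a torus either. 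Hence no decomposition $L=L_s\oplus L_n$ exists, and your argument fails to produce a $p$-ideal even though one is available (namely $\F z$). Since the theorem is asserted over an arbitrary field of prime characteristic, this step must be repaired. A cheap fix: in an abelian $L$ every $p$-subalgebra is a $p$-ideal, so either some $0\ne x\in L$ satisfies $\langle x\rangle_p\ne L$, which is the desired non-zero proper $p$-ideal, or every non-zero element generates $L$; in the latter case no non-zero element is $p$-nilpotent once $\dim_\F L\ge 2$ (its last non-vanishing $[p]$-power would span a one-dimensional $p$-subalgebra), whence $x\in\langle x^{[p]}\rangle_p=L$ for every non-zero $x$ and $L$ is a torus, a case you have already settled. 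Alternatively, one can first reduce to an algebraically closed ground field, as indicated in the remark following Theorem \ref{solvmax0PIM}, after which your decomposition is available.
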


\begin{proof}
We may proceed by induction on the dimension of the solvable restricted Lie algebra
$L$. If $L$ is abelian, then the assertion is a consequence of \cite[Satz II.3.2]{Fe0}
(see also \cite[Corollary 1]{Fe2}). In particular, it holds if $\dim_\F L=1$. Thus we may
assume that $L$ is not abelian, and that the claim holds for all Lie algebras of dimension
less than $\dim_\F L$.

As $L$ is solvable but not abelian, the $p$-subalgebra $I:=\langle[L,L]\rangle_p$ is
a non-zero proper $p$-ideal of $L$ (see \cite[Exercise 2.1.2 and Proposition 2.1.3(4)]{SF}).
By induction hypothesis, $I$ and $L/I$ have maximal $0$-PIM, and therefore it follows
from Lemma~\ref{ext} that $L$ also has maximal $0$-PIM.
\end{proof}

\noindent {\bf Remark.} It is easy to see directly that one-dimensional restricted Lie
algebras have maximal $0$-PIM. As in the proof of Theorem \ref{max0PIM} one could
assume in the proof of Theorem \ref{solvmax0PIM} that the ground field is algebraically
closed and then one could use in the induction step that finite-dimensional solvable
restricted Lie algebras over algebraically closed fields have a $p$-ideal of codimension
one.
\vspace{.3cm}

According to the conjugacy of maximal tori due to David John Winter, any maximal torus
of a finite-dimensional solvable restricted Lie algebra $L$ has dimension $\mt(L)$ (see
\cite[Proposition 2.17]{Win} or also \cite[Theorem 1.5.6]{StrI}). As a consequence of
this in conjunction with Theorem \ref{solvmax0PIM}, we obtain the following generalization
of \cite[Satz II.3.2]{Fe0}, \cite[Corollary 1]{Fe2}, \cite[Corollary 4.5]{Fa2},
\cite[Proposition~2.2]{Fa3}, and \cite[Proposition~1]{Fe4} to solvable restricted Lie
algebras:

\begin{cor}\label{0PIM}
Let $L$ be a finite-dimensional solvable restricted Lie algebra over a field $\F$ of prime
characteristic $p$, and let $\chi$ be a linear form on $L$. If $S$ is a one-dimensional
$L$-module with $p$-character $\chi$, then $P_L(S)\cong\ind_T^L(S_{\vert T},\chi)$
holds for every maximal torus $T$ of $L$.
\end{cor}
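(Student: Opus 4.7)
The plan is to reduce the corollary to a dimension equality and then invoke Corollary \ref{proj}. Since $\dim_\F S = 1$, the hypothesis $\dim_\F P_L(S) = (\dim_\F S)\cdot p^{\dim_\F L - \mt(L)}$ of Corollary \ref{proj} reduces to the single equality $\dim_\F P_L(S) = p^{\dim_\F L - \mt(L)}$, so the whole task is to establish this value for $\dim_\F P_L(S)$.

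First I would note the upper bound $\dim_\F P_L(S) \le p^{\dim_\F L - \mt(L)}$, which is immediate from Proposition \ref{upbd} with $M = S$. For the matching lower bound, I would apply Proposition \ref{reciproc} with $M = S$: it asserts that $P_L(\F)$ is a direct summand of the $u(L,0)$-module $P_L(S) \otimes S^*$. Since $S$, and hence $S^*$, is one-dimensional, $\dim_\F(P_L(S) \otimes S^*) = \dim_\F P_L(S)$, so this summand relation yields $\dim_\F P_L(\F) \le \dim_\F P_L(S)$. Theorem \ref{solvmax0PIM} applied to the solvable algebra $L$ provides $\dim_\F P_L(\F) = p^{\dim_\F L - \mt(L)}$, and the two bounds meet, giving the desired equality.

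Corollary \ref{proj} then yields $P_L(S) \cong \ind_T^L(S_{\vert T},\chi)$ for every torus $T$ of $L$ of maximal dimension $\mt(L)$. To upgrade this from tori of maximal dimension to \emph{every} maximal torus, I would rely on Winter's conjugacy theorem, recalled in the paragraph just before the statement, which guarantees that in a finite-dimensional solvable restricted Lie algebra every maximal torus actually has dimension $\mt(L)$. No step here looks like a real obstacle: the corollary is essentially a packaging of Theorem \ref{solvmax0PIM} with Proposition \ref{reciproc} serving as the bridge from the $0$-PIM to the PIM of an arbitrary one-dimensional module, followed by Corollary \ref{proj} and Winter's theorem to identify the induced structure on every maximal torus.
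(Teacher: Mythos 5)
Your argument is correct, and it follows the same overall skeleton as the paper's proof (establish $\dim_\F P_L(S)=p^{\dim_\F L-\mt(L)}$, then invoke Corollary \ref{proj} together with Winter's conjugacy theorem to pass from tori of maximal dimension to arbitrary maximal tori). The one place where you genuinely diverge is in how the dimension equality is obtained. The paper quotes an external result, Lemma 1 of \cite{Fe5}, which gives $\dim_\F P_L(S)=\dim_\F P_L(\F)$ directly for a one-dimensional module $S$ (essentially because tensoring with $S$ is an equivalence between the module categories of $u(L,0)$ and $u(L,\chi)$), and then applies Theorem \ref{solvmax0PIM}. You instead sandwich the dimension: the upper bound $\dim_\F P_L(S)\le p^{\dim_\F L-\mt(L)}$ comes from Proposition \ref{upbd}, and the lower bound $\dim_\F P_L(\F)\le\dim_\F P_L(S)$ comes from Proposition \ref{reciproc} applied to the one-dimensional module $S$ (noting that tensoring with the one-dimensional $S^*$ preserves dimension), after which Theorem \ref{solvmax0PIM} forces the two bounds to coincide. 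Both routes are valid; yours has the advantage of being entirely self-contained within the results already proved in the paper, at the cost of being slightly less direct, whereas the citation of \cite[Lemma 1]{Fe5} gives the exact equality $\dim_\F P_L(S)=\dim_\F P_L(\F)$ in one step and in fact yields a bit more information (it identifies the two projective covers up to a twist by $S$, not merely their dimensions).
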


\begin{proof}
As a consequence of Winter's theorem on the conjugacy of maximal tori, any maximal
torus $T$ of $L$ is a torus of maximal dimension $\mt(L)$ (see \cite[Proposition~2.17]{Win}
or \cite[Theorem 1.5.6]{StrI}). Now it follows from \cite[Lemma 1]{Fe5} and Theorem~\ref{solvmax0PIM}
that $$\dim_\F P_L(S)=\dim_\F P_L(\F)=p^{\dim_\F L-\mt(L)}\,,$$ and the assertion is
an immediate consequence of Corollary \ref{proj}.
\end{proof}


\section{An upper bound for the number of irreducible modules}


As an application of Theorem \ref{solvmax0PIM} we obtain an upper bound for the number
of the isomorphism classes of irreducible modules with a fixed $p$-character for a solvable
restricted Lie algebra. This result generalizes \cite[Theorem 4]{Fe5} and at the same time
simplifies the proof considerably.

\begin{thm}\label{numirr}
Let $L$ be a finite-dimensional solvable restricted Lie algebra over an algebraically closed
field $\F$ of prime characteristic $p$, and let $\chi$ be a linear form on $L$. Then the number
of isomorphism classes of irreducible $L$-modules with $p$-character $\chi$ is at most $p^{\mt(L)}$.
\end{thm}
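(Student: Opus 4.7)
The plan is to exploit the classical decomposition of the regular representation of $u(L,\chi)$ into projective indecomposables, combined with the fact (Theorem \ref{solvmax0PIM}) that the $0$-PIM of $L$ has the largest possible dimension $p^{\dim_\F L-\mt(L)}$, and a Frobenius-reciprocity-type lower bound on $\dim_\F P_L(S)\cdot\dim_\F S$ coming from Proposition \ref{reciproc}.

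First, I would recall the standard decomposition of $u(L,\chi)$ as a left module over itself:
\begin{equation*}
u(L,\chi)\cong\bigoplus_{S\in\irr(L,\chi)}P_L(S)^{\oplus\dim_\F S}\,,
\end{equation*}
where $\irr(L,\chi)$ denotes a set of representatives of the isomorphism classes of irreducible $L$-modules with $p$-character $\chi$. Taking dimensions and using $\dim_\F u(L,\chi)=p^{\dim_\F L}$, I obtain the identity
\begin{equation*}
p^{\dim_\F L}=\sum_{S\in\irr(L,\chi)}\dim_\F P_L(S)\cdot\dim_\F S\,.
\end{equation*}

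Next, for each $S\in\irr(L,\chi)$ the module $P_L(S)\otimes S^*$ has $p$-character $0$, and Proposition \ref{reciproc} yields that $P_L(\F)$ is a direct summand of $P_L(S)\otimes S^*$. This produces the uniform lower bound
\begin{equation*}
\dim_\F P_L(S)\cdot\dim_\F S=\dim_\F\bigl(P_L(S)\otimes S^*\bigr)\ge\dim_\F P_L(\F)\,.
\end{equation*}
Since $L$ is solvable, Theorem \ref{solvmax0PIM} gives $\dim_\F P_L(\F)=p^{\dim_\F L-\mt(L)}$.

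Substituting this bound back into the sum and letting $N$ denote the cardinality of $\irr(L,\chi)$ yields
\begin{equation*}
p^{\dim_\F L}\ge N\cdot p^{\dim_\F L-\mt(L)}\,,
\end{equation*}
from which $N\le p^{\mt(L)}$ follows immediately. There is no real obstacle here: every non-trivial ingredient has already been established earlier in the paper; the only thing to verify is that Proposition \ref{reciproc} applies in the present generality (irreducible $S$ with arbitrary $p$-character $\chi$), which it does since $M\otimes M^*$ always has trivial $p$-character and $\F$ need not be algebraically closed for that statement. The hypothesis that $\F$ is algebraically closed is used only implicitly to make the counting of $\irr(L,\chi)$ match the usual indexing of primitive idempotents of $u(L,\chi)$ by isomorphism classes of irreducibles.
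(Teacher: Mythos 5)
Your proposal is correct and follows essentially the same route as the paper's own proof: the dimension identity $p^{\dim_\F L}=\sum_{[S]}(\dim_\F S)(\dim_\F P_L(S))$, the lower bound $\dim_\F(P_L(S)\otimes S^*)\ge\dim_\F P_L(\F)$ from Proposition \ref{reciproc}, and the equality $\dim_\F P_L(\F)=p^{\dim_\F L-\mt(L)}$ from Theorem \ref{solvmax0PIM}. Your closing remark about where algebraic closedness enters (splitness of $u(L,\chi)$, so that the multiplicity of $P_L(S)$ in the regular module is exactly $\dim_\F S$) is also the right observation.
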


\begin{proof}
Let $\irr(L,\chi)$ denote the set of isomorphism classes of irreducible $L$-modules with
$p$-character $\chi$, and let $\vert\irr(L,\chi)\vert$ denote its cardinality. It follows
from Proposition \ref{reciproc} and Theorem \ref{solvmax0PIM} that
\begin{eqnarray*}
p^{\dim_\F L}=\dim_\F u(L,\chi) & = & \sum_{[S]\in\irr(L,\chi)}(\dim_\F S)[\dim_\F P_L(S)]\\
& = & \sum_{[S]\in\irr(L,\chi)}\dim_\F[P_L(S)\otimes S^*]\\
& \ge & \vert\irr(L,\chi)\vert\cdot\dim_\F P_L(\F)\\
& = & \vert\irr(L,\chi)\vert\cdot p^{\dim_\F L-\mt(L)}\,.
\end{eqnarray*}
Dividing both sides of the inequality by $p^{\dim_\F L-\mt(L)}$ yields $\vert\irr(L,\chi)
\vert\le p^{\mt(L)}$.
\end{proof}

\noindent {\bf Remark.} If one uses \cite[Theorem 2]{Fe2} instead of Proposition \ref{reciproc},
then the proof of Theorem \ref{numirr} yields a new proof of \cite[Satz~6]{Str1}.
\vspace{.3cm}

Of course, the proof of Theorem \ref{numirr} holds for any restricted Lie algebra with maximal
$0$-PIM, but we will show in the last section that, at least in characteristic $p>3$, such Lie
algebras are necessarily solvable.


\section{Induced representations of filtered restricted Lie algebras}


Let $L$ be a restricted Lie algebra over a field of prime characteristic $p$ with a descending
restricted filtration $$L=L_{(-1)}\supset L_{(0)}\supset L_{(1)}\supset\cdots\supset L_{(h)}
\supset L_{(h+1)}=0$$ of depth $1$ and height $h$. For integers $n>h$ put $L_{(n)}:=0$
and for integers $m<-1$ put $L_{(m)}:=L$. Recall that $[L_{(m)},L_{(n)}]\subseteq L_{(m+n)}$
for all $m,n\in\Z$ and $L_{(n)}^{[p]}\subseteq L_{(pn)}$ for every $n\in\Z$ (see \cite[Section
1.9 and Section 3.1]{SF}. In particular, $L_{(n)}$ is a $p$-subalgebra of $L$ for every $n\in\Z$
and $L_{(n)}$ is $p$-unipotent for every integer $n\ge1$.

The graded vector space $\gr(L):=\bigoplus_{n\in\Z}\gr_n(L)$, where $\gr_n(L):=L_{(n)}/
L_{(n+1)}$ for every $n\in\Z$, associated with the filtration $(L_{(n)})_{n\in\Z}$ carries
canonically the structure of a graded restricted Lie algebra with bracket $[\cdot,\cdot]\colon
\gr(L)\times\gr(L)\to\gr(L)$ given by $$[x+L_{(m+1)},y+L_{(n+1)}]=[x,y]+L_{(m+n+1)}
\quad\mbox{ for all }m,n\in\Z,x\in L_{(m)},y\in L_{(n)}$$ and $[p]$-mapping $(\cdot)^{[p]}
\colon\gr(L)\to\gr(L)$ given by $$(x+L_{(n+1)})^{[p]}=x^{[p]}+L_{(pn+1)}\quad\mbox{
for every }n\in\Z\mbox{ and for every }x\in L_{(n)}$$ (see \cite[Theorem 3.3.1]{SF}).

Any descending filtration $(L_{(n)})_{n\in\Z}$ of a Lie algebra $L$ induces a descending
filtration $(U(L)_{(n)})_{n\in\Z}$ on the universal enveloping algebra $U(L)$ of $L$, where
$$U(L)_{(n)}:=\sum_{\substack{s\geq 0,\ n_j\geq -1\\n_1+\cdots+n_s\geq n}}L_{(n_1)}
\cdots L_{(n_s)}\,.$$ Moreover, if $\gr(U(L)):=\bigoplus_{n\in\Z} \gr_n(U(L))$, where
$\gr_n(U(L)):=U(L)_{(n)}/U(L)_{(n+1)}$ for every $n\in\Z$, denotes the associated
graded algebra, one has a canonical isomorphism $\phi_\bullet\colon U(\gr(L))\to\gr(U(L))$
induced by $\phi(x+L_{(n+1)}):=x+U(L)_{(n+1)}$ for every $n\in\Z$ and every $x
\in L_{(n)}$ (cf.\ \cite[Theorem 1.9.5]{SF} for ascending filtrations).

Let $u(L):=u(L,0)$ denote the restricted universal enveloping algebra of a restricted
Lie algebra $L$. In particular, one has a surjective homomorphism of algebras $\pi\colon
U(L)\to u(L)$. Then $u(L)$ carries the filtration $(u(L)_{(n)})_{n\in\Z}$ given by $u(L)_{(n)}
:=\pi(U(L)_{(n)})$ for every $n\in\Z$. Thus, if $\gr(u(L)):=\bigoplus_{n\in\Z}\gr_n
(u(L))$, where $\gr_n(u(L)):=u(L)_{(n)}/u(L)_{(n+1)}$ for every $n\in\Z$, denotes
the graded algebra associated with the filtration $(u(L)_{(n)})_{n\in\Z}$, by construction,
one has a canonical surjective homomorphism of graded algebras $\gr(\pi)\colon\gr(U(L))
\to\gr(u(L))$.

Suppose now that $(L_n)_{n\in\Z}$ is a restricted filtration of the restricted Lie algebra
$L$ as above. Then $\gr(L)$ is a graded restricted Lie algebra, and one has a surjective
homomorphism of graded algebras $\pi_{\gr}\colon U(\gr(L))\to u(\gr(L))$ with respect
to the total degree. By construction, one has a homomorphism of graded restricted Lie
algebras $\varphi\colon\gr(L)\to\gr(u(L))$ defined by $\varphi(x+L_{(n+1)}):=x+u
(L)_{(n+1)}$, satisfying $\varphi((x+L_{(n+1)})^{[p]})=\varphi(x+L_{(n+1)})^p$ for
every $n\in\Z$ and every $x\in L_{(n)}$. Hence $\varphi$ induces a homomorphism of
graded algebras $\varphi_\bullet\colon u(\gr(L))\to\gr(u(L))$, i.e., one has a commutative
diagram
\begin{equation*}
\xymatrix{
U(\gr(L))\ar[r]^{\phi_\bullet}\ar[d]_{\pi_{\gr}}&\gr(U(L))\ar[d]^{\gr(\pi)}\\
u(\gr(L))\ar[r]^{\varphi_\bullet}&\gr(u(L))
}
\end{equation*}
In particular, $\varphi_\bullet$ is surjective. The following result will be important for our purpose.

\begin{pro}\label{gr}
If $(L_{(n)})_{n\in\Z}$ is a descending restricted filtration of a restricted Lie algebra $L$ of
depth $1$ and height $h$, then $\varphi_\bullet\colon u(\gr(L))\to\gr(u(L))$ is an isomorphism
of graded algebras.
\end{pro}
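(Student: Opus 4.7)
The plan is to prove that $\varphi_\bullet$ is an isomorphism by a dimension count. Since $\varphi_\bullet$ is already known to be surjective, it is enough to show that $u(\gr(L))$ and $\gr(u(L))$ have the same (finite) $\F$-dimension, namely $p^{\dim_\F L}$.

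First I would compute $\dim_\F u(\gr(L))$. The assumption that the restricted filtration has depth $1$ and height $h$ means $L=L_{(-1)}\supseteq L_{(0)}\supseteq\cdots\supseteq L_{(h+1)}=0$, so a telescoping argument gives $\dim_\F\gr(L)=\dim_\F L$. Applying the standard dimension formula $\dim_\F u(M)=p^{\dim_\F M}$ for a finite-dimensional restricted Lie algebra $M$ yields $\dim_\F u(\gr(L))=p^{\dim_\F\gr(L)}=p^{\dim_\F L}$.

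Next I would establish that $\dim_\F\gr(u(L))=\dim_\F u(L)=p^{\dim_\F L}$. The idea is to pick a basis $x_1,\ldots,x_d$ of $L$ adapted to the filtration: each $x_i$ lies in $L_{(k_i)}\setminus L_{(k_i+1)}$ with $-1\le k_i\le h$, and assign it the weight $k_i$. By the Poincar\'e--Birkhoff--Witt-type theorem for restricted enveloping algebras, the ordered monomials $x_1^{a_1}\cdots x_d^{a_d}$ with $0\le a_i<p$ form an $\F$-basis of $u(L)$; there are exactly $p^{\dim_\F L}$ of them, each carrying a total weight $\sum_i a_ik_i$. Using that $[L_{(m)},L_{(n)}]\subseteq L_{(m+n)}$ and that $L_{(n)}^{[p]}\subseteq L_{(pn)}$, the processes of reordering a product and applying the $[p]$-identities preserve filtration weight, so the image in $u(L)$ of the spanning set of $U(L)_{(n)}$ shows that $u(L)_{(n)}$ admits as an $\F$-basis precisely those PBW monomials of weight at least $n$. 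Since the total weights $\sum_i a_ik_i$ range over a bounded interval of integers, the filtration $(u(L)_{(n)})_{n\in\Z}$ is finite: $u(L)_{(n)}=u(L)$ for $n$ sufficiently small and $u(L)_{(n)}=0$ for $n$ sufficiently large. Telescoping the associated graded yields $\dim_\F\gr(u(L))=\dim_\F u(L)=p^{\dim_\F L}$.

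Putting these two computations together, $\varphi_\bullet$ is a surjective homomorphism of graded $\F$-algebras whose source and target both have dimension $p^{\dim_\F L}$, so it must be an isomorphism. The main obstacle is the intermediate claim that $u(L)_{(n)}$ is spanned by PBW monomials of weight at least $n$; once this boundedness of the induced filtration on $u(L)$ is in place, the whole argument reduces to counting dimensions. The commutative diagram with $\phi_\bullet$ and $\pi_{\gr}$ is not strictly needed for this approach, but it provides an alternative path: one could instead lift any element of $\ker(\varphi_\bullet)$ through $\pi_{\gr}$, use that $\phi_\bullet$ is an isomorphism to transfer the kernel analysis to $\gr(U(L))$, and reduce injectivity of $\varphi_\bullet$ to compatibility of $\ker(\gr(\pi))$ with $\ker(\pi_{\gr})$, which again boils down to the same boundedness fact.
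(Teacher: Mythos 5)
Your argument is correct and follows essentially the same route as the paper's: both rest on choosing an ordered basis of $L$ adapted to the filtration, assigning each restricted PBW monomial $x_1^{a_1}\cdots x_d^{a_d}$ the weight $\sum_i a_ik_i$, and using Jacobson's PBW theorem together with the fact that reordering and the $p$-power identities do not decrease this weight, so that $u(L)_{(n)}$ is precisely the span of the monomials of weight at least $n$. The paper concludes by matching this basis of $\gr(u(L))$ degreewise with the corresponding PBW basis of $u(\gr(L))$, whereas you conclude by the equivalent observation that the already-established surjectivity of $\varphi_\bullet$ together with $\dim_\F u(\gr(L))=p^{\dim_\F L}=\dim_\F u(L)=\dim_\F\gr(u(L))$ forces bijectivity; this is only a repackaging of the same computation.
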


\begin{proof}
Let $X=\bigsqcup_{-1\leq n\leq h} X_n$ be an ordered basis of $L$ such that
\begin{itemize}
\item[(i)] for $x\in X_m$ and $y\in X_n$ with $m<n$ one has $x<y$,
\item[(ii)] $\{x+L_{(n+1)}\mid x\in X_n\,\}$ is a basis of $\gr_n(L)$.
\end{itemize}
Let $\fF$ denote the set of functions from $X$ to $\{0,\dots,p-1\}$ with finite support. For
$\alpha\in\fF$ let $x^{\alpha}\in u(L)$ denote the monomial $\prod_{x\in X}x^{\alpha(x)}$,
where the product is taken with respect to increasing order. For $\alpha\in\fF$ put $$\vert
\alpha\vert:=\sum_{-1\leq n\leq h}n\cdot\sum_{x\in X_n}\alpha(x)\in\Z\,.$$ Then, by
\cite[Proposition~1.9.1]{SF} and Jacobson's analogue of the Poincar\'e-Birkhoff-Witt
theorem (see \cite[Theorem 2.5.1(2)]{SF}) $\{x^\alpha+u(L)_{(n+1)}\mid\vert\alpha\vert=n\}$
is a basis of $\gr_n(u(L))$, and $\{\prod_{-1\leq n\leq h}\prod_{x\in X_n}(x+L_{(n+1)})^{\alpha(x)}
\mid\vert\alpha\vert=n\}$, where products are taken with respect to increasing order in $X$,
is a basis of the homogeneous component of $u(\gr(L))$ of total degree $n$. Hence, since
$\varphi$ is mapping a basis injectively onto a basis, $\varphi_\bullet$ is an isomorphism.
\end{proof}

From the proof of Proposition \ref{gr} one also deduces the following properties:

\begin{cor}\label{aug}
If $(L_{(n)})_{n\in\Z}$ is a descending restricted filtration of a restricted Lie algebra $L$ of
depth $1$ and height $h$, then the following statements hold:
\begin{itemize}
\item[(1)] $u(L)_{(1)}\subseteq u(L)\cdot\fu(L_{(1)})$, where $\fu(L_{(1)}):=\Ker(u(L_{(1)})
                 \to\F)$ denotes the augmentation ideal of $u(L_{(1)})$.
\item[(2)] $u(L)_{(0)}\subseteq u(L_{(0)})+u(L)\cdot\fu(L_{(1)})$.
\end{itemize}
\end{cor}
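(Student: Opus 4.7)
The plan is to deduce both inclusions directly from the PBW-type basis of $u(L)$ constructed in the proof of Proposition \ref{gr}. That argument exhibits $\{x^\alpha\mid\alpha\in\fF\}$ as an $\F$-basis of $u(L)$ with the property that $\{x^\alpha+u(L)_{(n+1)}\mid|\alpha|=n\}$ is a basis of $\gr_n(u(L))$; consequently $u(L)_{(n)}$ is spanned as an $\F$-vector space by $\{x^\alpha\mid|\alpha|\ge n\}$. The crucial feature of the ordering on $X$ is that $X_{-1}<X_0<X_1<\cdots$, so in every monomial $x^\alpha$ the factors lying in $X_k$ with $k\ge 1$ sit at the right end of the product.

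For (1), I would take a basis monomial $x^\alpha$ with $|\alpha|\ge 1$. Setting $\alpha_n:=\sum_{x\in X_n}\alpha(x)$, the inequality $|\alpha|=-\alpha_{-1}+\sum_{k\ge 1}k\alpha_k\ge 1$ forces $\alpha_k\ge 1$ for at least one $k\ge 1$. The rightmost factor of $x^\alpha$ is then an element $y\in X_k\subseteq L_{(1)}\subseteq\fu(L_{(1)})$, and writing $x^\alpha=x^{\alpha'}\cdot y$ exhibits $x^\alpha\in u(L)\cdot\fu(L_{(1)})$. Summing over basis elements gives the inclusion.

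For (2), the basis monomials spanning $u(L)_{(0)}$ are those with $|\alpha|\ge 0$. If $|\alpha|\ge 1$, then $x^\alpha\in u(L)\cdot\fu(L_{(1)})$ by (1). If $|\alpha|=0$ and some $\alpha_k\ge 1$ with $k\ge 1$, the argument of (1) still applies. The only remaining case is $\alpha_k=0$ for all $k\ge 1$, in which case $|\alpha|=-\alpha_{-1}=0$ forces $\alpha_{-1}=0$ as well, so all factors of $x^\alpha$ lie in $X_0\subseteq L_{(0)}$ and $x^\alpha\in u(L_{(0)})$.

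There is no genuine obstacle here: the only subtlety is the side on which the augmentation ideal appears, and the ordering on $X$ already fixed in the proof of Proposition \ref{gr} is precisely the one that pushes the $L_{(1)}$-factors to the right, which is what the right-module form $u(L)\cdot\fu(L_{(1)})$ of the statement requires.
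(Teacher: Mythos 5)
Your argument is correct and is exactly the route the paper intends: the corollary is stated as a consequence of the PBW-type basis constructed in the proof of Proposition \ref{gr}, and your observation that the chosen ordering on $X$ forces every monomial $x^\alpha$ with a factor from $X_k$, $k\ge 1$, to end in an element of $L_{(1)}\subseteq\fu(L_{(1)})$ is precisely the point. The case analysis on $|\alpha|$ and the $\alpha_k$ is complete, so nothing is missing.
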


Let $(L_{(n)})_{n\in\Z}$ be a descending restricted filtration of a restricted Lie algebra $L$ of
depth $1$ and height $h$. Then every restricted $\gr_0(L)$-module $V$ can be considered as
a restricted $L_{(0)}$-module. The induced module $$M:=\ind_{L_{(0)}}^L(V,0):=u(L)\otimes_{u
(L_{(0)})}V$$ has a descending filtration $(M_{(r)})_{r\in\Z}$ given by $M_{(r)}:=u(L)_{(r)}
(1\otimes V)$, i.e., for all $r,s\in\Z$ one has
\begin{equation}
\label{eq:filtr}
u(L)_{(r)}\cdot M_{(s)}\subseteq M_{(r+s)}\,.
\end{equation}
Since the restricted $L_{(0)}$-module $V$ is inflated from $\gr_0(L)$, Corollary \ref{aug} implies
that $M_{(0)}=1\otimes V$, and $M_{(r)}=0$ for every integer $r>0$.

Let $\gr(M):=\bigoplus_{r\in\Z}\gr_r(M)$, where $\gr_r(M):=M_{(r)}/M_{(r+1)}$ for every
$r\in\Z$, denote the associated graded vector space. According to \eqref{eq:filtr}, $\gr(M)$
is a $\gr(u(L))$-module. Thus, by Proposition \ref{gr}, $\gr(M)$ may be considered as a restricted
$\gr(L)$-module.

\begin{thm}\label{grind}
Let $L$ be a restricted Lie algebra, and let $(L_{(n)})_{n\in\Z}$ be a descending restricted
filtration of $L$ of depth $1$ and height $h$. Let $V$ be a restricted $\gr_0(L)$-module, and
let $M:=\ind_{L_{(0)}}^L(V,0)$. Then one has a canonical isomorphism of graded restricted
$\gr(L)$-modules $$\ind_{\gr_+(L)}^{\gr(L)}(V,0)\cong\gr(M)\,,$$ where $\gr_+(L):=
\bigoplus_{n\geq 0}\gr_n(L)$. In particular, if $\ind_{\gr_+(L)}^{\gr(L)}(V,0)$ is an irreducible
$\gr(L)$-module, then $M$ is an irreducible $L$-module.
\end{thm}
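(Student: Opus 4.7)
The plan is to construct a canonical graded restricted $\gr(L)$-module homomorphism $\Psi\colon\ind_{\gr_+(L)}^{\gr(L)}(V,0)\to\gr(M)$ and verify that it is bijective, after which the irreducibility claim will follow from a standard filtration argument. The key preliminary observation is already noted above: $M_{(0)}=1\otimes V$ and $M_{(r)}=0$ for every $r\geq 1$ (by Corollary \ref{aug}), so $\gr_0(M)$ is naturally identified with $V$ as a $\gr_0(L)$-module. Moreover, for any $n\geq 1$, the action of $\gr_n(L)$ on $\gr_0(M)\subseteq M_{(0)}$ lands in $M_{(n)}=0$, so this identification is a restricted $\gr_+(L)$-module embedding under the trivial extension of the $\gr_0(L)$-action to all of $\gr_+(L)$. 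Viewing $\gr(M)$ as a $u(\gr(L))$-module via the isomorphism $\varphi_\bullet$ of Proposition \ref{gr}, the universal property of restricted induction then yields $\Psi$, which is grading-preserving by construction.

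For surjectivity, note that $M_{(r)}=u(L)_{(r)}(1\otimes V)$ by definition, so $\gr(M)$ is generated by $\gr_0(M)=1\otimes V$ as a $\gr(u(L))$-module, hence as a $u(\gr(L))$-module; in particular, the image of $\Psi$ exhausts $\gr(M)$. For injectivity, one compares total dimensions. Since the filtration has depth $1$, one has $\gr_{-n}(L)=0$ for $n\geq 2$, so $\gr(L)=\gr_{-1}(L)\oplus\gr_+(L)$ as a vector space; applying Jacobson's analogue of the PBW theorem one obtains
$$\dim_\F\ind_{\gr_+(L)}^{\gr(L)}(V,0)=(\dim_\F V)\cdot p^{\dim_\F\gr_{-1}(L)}=(\dim_\F V)\cdot p^{\dim_\F L-\dim_\F L_{(0)}},$$
which by \cite[Proposition 5.6.2]{SF} equals $\dim_\F M$. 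Thus the surjection $\Psi$ is an isomorphism.

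For the final assertion, let $N$ be any nonzero $L$-submodule of $M$ and equip it with the induced filtration $N_{(r)}:=N\cap M_{(r)}$. Then $\gr(N)$ embeds as a nonzero graded $\gr(L)$-submodule of $\gr(M)\cong\ind_{\gr_+(L)}^{\gr(L)}(V,0)$, so irreducibility of the latter forces $\gr(N)=\gr(M)$, i.e., $N_{(r)}+M_{(r+1)}=M_{(r)}$ for every $r\in\Z$. Since $M_{(r)}=0$ for $r\geq 1$, a descending induction on $r$ starting from $r=0$ yields $N_{(r)}=M_{(r)}$ for all $r$, and hence $N=M$. The main care point throughout is bookkeeping: verifying that the restricted $\gr_+(L)$-structure on $V$ is well-defined (the $[p]$-map into $\gr_{pn}(L)$ acts trivially on $V$ for $n\geq 1$) and that $\Psi$ respects both the grading and the restricted structure; once these routine checks are in place, the dimension count and the filtration argument are straightforward.
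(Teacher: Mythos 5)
Your proposal is correct and follows essentially the same route as the paper: the map is obtained from $\gr_0(M)\cong V$ via the adjunction between induction and restriction, surjectivity holds because $\gr(M)$ is generated over $\gr(u(L))\cong u(\gr(L))$ by its degree-zero part, and bijectivity then follows from Jacobson's PBW theorem, while the irreducibility transfer is the standard passage from a submodule to its associated graded submodule. The only cosmetic difference is that you establish injectivity by a single total-dimension count (implicitly using that $V$ and $\gr_{-1}(L)$ are finite-dimensional, as they are in all applications), whereas the paper matches explicit PBW bases degree by degree.
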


\begin{proof}
It follows from $M_{(0)}=1\otimes V$and $M_{(r)}=0$ for every integer $r>0$ that one has
an isomorphism $\iota\colon V\to\gr_0(M)$ of graded $u(\gr_+(L))$-modules. As induction is
the left-adjoint of the restriction functor, this yields a mapping of graded $u(\gr(L))$-modules $\iota_\bullet\colon\ind_{\gr_+(L)}^{\gr(L)}(V,0)\to\gr(M)$.
By construction, $\iota_\bullet$ is surjective.

Let $X_{-1}\subseteq L$ be an ordered set such that $\{x+L_{(0)}\mid x\in X_{-1}\}$ is a
basis of $\gr_{-1}(L)$. Then, using the same notation as used in the proof of Proposition \ref{gr},
one obtains for $r<0$ an isomorphism of vector spaces $$\gr_r(M)\cong\bigoplus_{\substack{
\alpha\in\fF,\vert\alpha\vert=r}}\F x^\alpha\otimes V\,,$$ where $\vert\alpha\vert:=-\sum_{x
\in X_{-1}}\alpha(x)$. By Jacobson's analogue of the Poincar\'e-Birkhoff-Witt theorem (see
\cite[Theorem 2.5.1(2)]{SF}), one has an isomorphism of graded vector spaces $$u(\gr(L))
\otimes_{u(\gr_+(L))}\F\cong\F[X_{-1}]/\F[X_{-1}]\{x^p\mid x\in X_{-1}\}\,.$$ This yields
the desired isomorphism. The final remark follows from the fact that any non-zero proper
$L$-submodule $S$ of $M$ gives rise to a non-zero proper $\gr(L)$-module $\gr(S)$ of
$\gr(M)$, where $\gr_r(S):=(S\cap M_{(r)})+M_{(r+1)}/M_{(r+1)}$ for every $r\in\Z$.
\end{proof}


\section{Representations of non-graded Hamiltonian Lie algebras}


Let $\caO(2;\uone)=\F[x,y]/\F[x,y]\{ x^p,y^p\}$ denote the truncated polynomial algebra
in two variables $x$ and $y$ over a field $\F$ of prime characteristic $p$. By $\caO(2;\uone)^{(1)}$
we denote its augmentation ideal, i.e., $f\in \caO(2;\uone)^{(1)}$ if, and only if, $f$ has zero
constant term. Moreover, $\der_x=\der/\der x$ and $\der_y=\der/\der y$ will denote the partial
derivatives of the algebra $\caO(2;\uone)$.

The Hamiltonian Lie algebra $H(2;\uone)$ is defined as the subalgebra of the Jacobson-Witt algebra
$W(2;\uone)$ whose elements annihilate the standard $2$-form $\omega_H:=\dx\wedge\dy$, i.e.,
one has
\begin{align}
H(2;\uone)&:=\{D\in W(2;\uone)\mid D(\omega_H)=0\}\label{eq:h21}\\
&\,\,=\{f\der_x+g\der_y\in W(2;\uone)\mid\der_y(g)=-\der_x(f)\}\,.\label{eq:h22}
\end{align}
Since $W(2;\uone)$ is a graded restricted Lie algebra, \eqref{eq:h21} implies that $H(2;\uone)$ is
also a graded restricted Lie algebra. Let $\{\cdot,\cdot\}\colon\caO(2;\uone)\times\caO(2;\uone)
\to\caO(2,\uone)$ denote the standard Poisson bracket given by
\begin{equation}
\label{eq:poissonstand}
\{f,g\}=(\der_xf)(\der_yg)-(\der_yf)(\der_xg)\,\,\mbox{ for all }f,g\in\caO(2;\uone)\,.
\end{equation}
Then $(\caO(2;\uone),\{\cdot,\cdot\})$ is a Lie algebra, and the canonical map (of degree $-2$)
\begin{equation}
\label{eq:caD}
\boD\colon(\caO(2;\uone),\{\cdot,\cdot\})\to W(2;\uone)\,,\ \ 
\boD(f):=\der_x(f)\der_y-\der_y(f)\der_x\,\,\mbox{ for }f\in\caO(2;\uone)\,,
\end{equation}
defines a Lie algebra homomorphism satisfying $\Ker(\boD)=\F 1$ and $\im(\boD)\subseteq
H(2;\uone)$. It is well known that $\im(\boD)$ is an ideal in $H(2;\uone)$ containing the derived
subalgebra of $H(2;\uone)$. Moreover, for $p\geq3$
\begin{equation}\label{eq:defH2sim}
H(2;\uone)^{(2)}=\langle\{\boD(x^ay^b)\mid 0<a+b<2(p-1)\}\rangle_\F\subset H(2;\uone)
\end{equation}
is a simple ideal of $H(2;\uone)$. For further details see \cite[\S 4.2]{StrI} and \cite[\S 4.4]{SF}.
(Note that in \cite{SF}, the algebra $H(2;\uone)$ is denoted by $H^{\prime\prime}(2;\duone)$,
and that $\im(\boD)$ is denoted by $H^\prime(2;\duone)$.)

Define $\Gamma:=x^{p-1}\der_y$ and $\Theta:=-y^{p-1}\der_x$. Then, by \eqref{eq:h22},
$\Gamma,\Theta\in H(2;\uone)$. Since $x^{p-1}\not\in\im(\der_x)$ and $y^{p-1}\not\in\im
(\der_y)$, \eqref{eq:caD} implies that $\Gamma,\Theta\not\in\im(\boD)$. Hence
\begin{equation}\label{eq:algY}
\boY:=\im(\boD)\oplus\F\Gamma\oplus\F\Theta\subseteq H(2;\uone)
\end{equation}
is a graded subalgebra of $H(2;\uone)$ of dimension $p^2+1$. (Indeed, it follows from
\cite[Proposition 2.1.8(b)(ii)]{BW} that $\dim_\F H(2;\uone)=p^2+1$, so $\boY=H(2;\uone)$,
but this is irrelevant for our purpose.) From the identities
\begin{align}
[\Gamma,\boD(x^ay^b)]&=b\,\boD(x^{p-1+a}y^{b-1})\,,\label{eq:idH1}\\
[\Theta,\boD(x^ay^b)]&=-a\,\boD(x^{a-1}y^{p-1+b})\,,\label{eq:idH2}\\
[\Gamma,\Theta]&=\boD(x^{p-1}y^{p-1})\,,\label{eq:idH3}\\
[\boD(x),\boD(x^ay^b)]&=b\,\boD(x^ay^{b-1})\,,\label{eq:idH4}\\
[\boD(y),\boD(x^ay^b)]&=-a\,\boD(x^{a-1}y^{b})\,,\label{eq:idH5}\\
\boD(x^ay^b)^{[p]}&=
\begin{cases}
\hfil 0\hfil&\ \ \text{if $(a,b)\not=(1,1)$\,,}\\
\hfil \boD(xy)\hfil&\ \ \text{if $a=b=1$\,,}
\end{cases}
\label{eq:idH6}\\
\Gamma^{[p]}&=0\,,\label{eq:idH7}\\
\Theta^{[p]}&=0\,.\label{eq:idH8}
\end{align}
for all $a,b\in\{0,\dots,p-1\}$ such that $(a,b)\not=(0,0)$, one concludes that $\boY$ is a
graded $p$-subalgebra of $H(2;\uone)$. Moreover, by construction, one has the
following property:

\begin{pro}\label{coker}
The graded vector space $\coker(H(2;\uone)^{(2)}\to\boY)$ is concentrated in degrees $p-2$
and $2p-4$.
\end{pro}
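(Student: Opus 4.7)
The plan is to exhibit explicit homogeneous bases of $H(2;\uone)^{(2)}$ and $\boY$, pick out which basis elements of $\boY$ survive modulo $H(2;\uone)^{(2)}$, and read off their degrees in the standard grading of $W(2;\uone)$.

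For the first step, since $\Ker(\boD) = \F 1$ by \eqref{eq:caD} and since the monomials $\{x^ay^b \mid 0 \leq a,b \leq p-1\}$ form a basis of $\caO(2;\uone)$, the set $\{\boD(x^ay^b) \mid 0 < a+b \leq 2(p-1),\ 0 \leq a,b \leq p-1\}$ is a basis of $\im(\boD)$. Adjoining $\Gamma$ and $\Theta$ yields, via \eqref{eq:algY}, a homogeneous basis of $\boY$. Since \eqref{eq:defH2sim} furnishes $H(2;\uone)^{(2)}$ with the basis $\{\boD(x^ay^b) \mid 0 < a+b < 2(p-1)\}$, comparison shows that precisely the classes of $\boD(x^{p-1}y^{p-1})$, $\Gamma$, and $\Theta$ form a basis of $\boY/H(2;\uone)^{(2)}$.

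For the second step, in the standard grading on $W(2;\uone)$, where $\deg(x) = \deg(y) = 1$ and $\deg(\der_x) = \deg(\der_y) = -1$, the map $\boD$ in \eqref{eq:caD} is homogeneous of degree $-2$. Hence $\boD(x^{p-1}y^{p-1})$ lies in degree $2(p-1) - 2 = 2p-4$, whereas $\Gamma = x^{p-1}\der_y$ and $\Theta = -y^{p-1}\der_x$ both lie in degree $(p-1) - 1 = p-2$. This yields the asserted concentration of $\coker(H(2;\uone)^{(2)} \to \boY)$ in degrees $p-2$ and $2p-4$. I do not anticipate any serious obstacle: the argument is pure basis bookkeeping once $\Ker(\boD) = \F 1$ is invoked to justify the chosen basis of $\im(\boD)$.
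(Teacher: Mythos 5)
Your argument is correct and is precisely the bookkeeping the paper compresses into the phrase ``by construction'': since $\Ker(\boD)=\F 1$, the elements $\boD(x^ay^b)$ with $(a,b)\neq(0,0)$ form a homogeneous basis of $\im(\boD)$, so $\boY/H(2;\uone)^{(2)}$ is spanned by the classes of $\boD(x^{p-1}y^{p-1})$, $\Gamma$, and $\Theta$, which lie in degrees $2p-4$, $p-2$, and $p-2$. There is nothing to add.
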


\noindent {\bf Remark.}  Note that $\boY_0:=\boY\cap H(2;\uone)_0\cong\fsl_2(\F)$ (see
\cite[Proposition 4.4.4(4)]{SF}).
\vspace{.3cm}

The Hamiltonian Lie algebra $H(2;\uone;\Phi(\tau))$ is defined as the subalgebra of the Jacobson-Witt
algebra $W(2;\uone)$ whose elements annihilate the $2$-form $$\omega_{\Phi(\tau)}:=(1+x^{p-1}
y^{p-1})\,\dx\wedge\dy\,,$$ i.e., one has
\begin{equation}\label{eq:h2tau}
H(2;\uone;\Phi(\tau)):=\{D\in W(2;\uone)\mid D(\omega_{\Phi(\tau)})=0\}\,.
\end{equation}
In particular, $H(2;\uone;\Phi(\tau))$ is a $p$-subalgebra of $W(2;\uone)$, and an element $f\der_x
+g\der_x\in W(2;\uone)$ is contained in $H(2;\uone;\Phi(\tau))$ if, and only if,
\begin{equation}\label{eq:defeqH2tau}
(1+x^{p-1}y^{p-1})(\der_xf+\der_yg)-x^{p-2}y^{p-2}(yf+xg)=0\,.
\end{equation}
For the distinguished element $\Lambda=1-x^{p-1}y^{p-1}\in\caO(2;\uone)$ one defines the Poisson
bracket $\{\cdot,\cdot\}_\Lambda\colon\caO(2;\uone)\times\caO(2;\uone)\to\caO(2;\uone)$ by
\begin{equation}\label{eq:poisson}
\{f,g\}_\Lambda=\big((\der_xf)(\der_yg)-(\der_yf)(\der_xg)\big)\Lambda\,\,\mbox{ for all }f,g\in
\caO(2,\uone)\,.
\end{equation}
Then $(\caO(2;\uone),\{\cdot,\cdot\}_\Lambda)$ is a Lie algebra, and the map $\boD_\Lambda=\Lambda
\cdot\boD\colon\caO(2;\uone)\to W(2;\uone)$, where $\boD$ is defined as in \eqref{eq:caD}, is a
homomorphism of Lie algebras satisfying $\Ker(\boD_\Lambda)=\F 1$ and $H:=\im(\boD_\Lambda)
\subseteq H(2;\uone;\Phi(\tau))$. Moreover, one has
\begin{equation}\label{eq:monomD}
\{(a,b)\mid 0\leq a,b\leq p-1,\ \boD_\Lambda(x^ay^b)\not=\boD(x^ay^b)\,\}=\{(1,0),(0,1)\}\,.
\end{equation}
If $p>3$, then $H$ coincides with the derived subalgebra $H(2;\uone;\Phi(\tau))^{(1)}$ of $H(2;
\uone;\Phi(\tau))$ which is a simple Lie algebra. For further details the reader may wish to consult
\cite[\S 1]{Str2} and the references therein, as well as \cite[\S6.3]{StrI}.

Let $\Gamma:= x^{p-1}\der_y$, $\Theta:=-y^{p-1}\der_x\in W(2;\uone)$ be given as before.
Then, by \eqref{eq:defeqH2tau}, $\Gamma,\Theta\in H(2;\uone;\Phi(\tau))$, but $\Gamma,\Theta
\not\in H$, and
\begin{equation}\label{eq:defL}
L:=H\oplus\F\Gamma\oplus\F\Theta\,,
\end{equation}
is a $p$-subalgebra of $H(2;\uone;\Phi(\tau))$ that is isomorphic to $\Der(H)$ (cf.\ \cite[Proposition~1.1(1)]{Str2}).
Hence, for $p>3$, it follows from \cite[Theorem 10.3.1]{StrII} and the argument in the middle of
\cite[p.~41]{StrII} that $L$ coincides with the minimal $p$-envelope of $H$ in $W(2;\uone)$. The
following identities hold in $L$ (see \cite[p.~582]{Str2}):
\begin{align}
[\Gamma,\boD_\Lambda(x^ay^b)]&=b\,\boD_\Lambda(x^{p-1+a}y^{b-1})\,,\label{eq:idHtau1}\\
[\Theta, \boD_\Lambda(x^ay^b)]&=-a\,\boD_\Lambda(x^{a-1}y^{p-1+b})\,,\label{eq:idHtau2}\\
[\Gamma,\Theta]&=\boD_\Lambda(x^{p-1}y^{p-1})\,,\label{eq:idHtau3}\\
[\boD_\Lambda(x),\boD_\Lambda(x^ay^b)]&=
\begin{cases}
\hfil b\,\boD_\Lambda(x^ay^{b-1})\hfil&\ \ \text{if $(a,b)\not=(0,1)$\,,}\\
\hfil-\boD_\Lambda(x^{p-1}y^{p-1})\hfil&\ \ \text{if $(a,b)=(0,1)$\,,}
\end{cases}
\label{eq:idHtau4}\\
[\boD_\Lambda(y),\boD_\Lambda(x^ay^b)]&=
\begin{cases}
\hfil -a\,\boD_\Lambda(x^{a-1}y^b)\hfil&\ \ \text{if $(a,b)\not=(1,0)$\,,}\\
\hfil\boD_\Lambda(x^{p-1}y^{p-1})\hfil&\ \ \text{if $(a,b)=(1,0)$\,,}
\end{cases}
\label{eq:idHtau5}\\
\boD_{\Lambda}(x^ay^b)^{[p]}&=
\begin{cases}
\hfil 0\hfil&\ \ \text{if $(a,b)\not\in\{(0,1),(1,0),(1,1)\}$\,,}\\
\hfil \boD_\Lambda(xy)\hfil&\ \ \text{if $a=b=1$\,,}\\
\hfil\Gamma\hfil&\ \ \text{if $(a,b)=(1,0)$\,,}\\
\hfil\Theta\hfil&\ \ \text{if $(a,b)=(0,1)$\,,}
\end{cases}
\label{eq:idHtau6}\\
\Gamma^{[p]}&=0,\label{eq:idHtau7}\\
\Theta^{[p]}&=0\label{eq:idHtau8}\,.
\end{align}
The natural grading on $W(2;\uone)$ induces filtrations
\begin{equation}
\label{eq:filt}
H_{(n)}:=H\cap W(2;\uone)_{(n)},\qquad L_{(n)}:=L\cap W(2;\uone)_{(n)}\,\,\mbox{ for every }n\in\Z\,,
\end{equation}
where $W(2;\uone)_{(n)}=\bigoplus_{d\geq n}W(2;\uone)_d$. 
The following result will be important for our purpose.

\begin{pro}\label{filt}
The family $(L_{(n)})_{n\in\Z}$ is a descending restricted filtration of $L$ of depth $1$ and height $2p-4$.
Moreover, $\gr(L)\cong\boY$.
\end{pro}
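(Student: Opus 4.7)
The plan is to exploit that the filtration on $L$ is inherited from the standard $\Z$-grading on $W(2;\uone)$, and that $\boY$ collects precisely the leading (lowest $W$-degree) homogeneous components of the generators of $L$.

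Since $W(2;\uone)$ is a graded restricted Lie algebra, its associated descending filtration $W(2;\uone)_{(n)}=\bigoplus_{d\geq n}W(2;\uone)_d$ is a descending restricted filtration; intersecting with the $p$-subalgebra $L$ preserves both properties. For the depth, I would check $L_{(-1)}\supsetneq L_{(0)}$ via $\boD_\Lambda(x)=\der_y-x^{p-1}y^{p-1}\der_y\in L$, whose degree-$(-1)$ component $\der_y$ is nonzero. For the height, the homogeneous element $\boD(x^{p-1}y^{p-1})=\boD_\Lambda(x^{p-1}y^{p-1})\in L$ of $W$-degree $2p-4$ shows $L_{(2p-4)}\neq 0$. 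To verify $L_{(2p-3)}=0$, I would expand a general element as $\xi=\sum_{(a,b)\neq(0,0)}c_{a,b}\boD_\Lambda(x^ay^b)+\alpha\Gamma+\beta\Theta$; by \eqref{eq:monomD}, the degree-$(-1)$ component is $c_{1,0}\der_y-c_{0,1}\der_x$, so $\xi\in L_{(2p-3)}$ forces $c_{1,0}=c_{0,1}=0$, after which the remaining expression becomes a sum of $W$-homogeneous terms of degrees at most $2p-4$, forcing all other coefficients to vanish.

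Next I would define $\psi\colon\gr(L)\to\boY$ by $\psi_n(\xi+L_{(n+1)}):=\xi_n$, the $W(2;\uone)_n$-component of $\xi\in L_{(n)}$. Injectivity is automatic from $L_{(n+1)}=L\cap W(2;\uone)_{(n+1)}$. To see that $\psi$ takes values in $\boY$: for $n\geq 0$, an element $\xi\in L_{(n)}$ has $c_{1,0}=c_{0,1}=0$, hence is a combination of $\boD(x^ay^b)$ with $a+b\geq 2$ together with $\Gamma,\Theta$, all homogeneous elements of $\boY$; for $n=-1$, $\xi_{-1}$ is a combination of $\boD(x)=\der_y$ and $\boD(y)=-\der_x$, which span $\boY_{-1}$. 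Surjectivity onto $\boY_n$ follows from the obvious choice of preimages: $\boD_\Lambda(x^ay^b)$ for the homogeneous generators of $\boY$ in nonnegative degrees, $\boD_\Lambda(x)$ and $\boD_\Lambda(y)$ for $\boY_{-1}$, and $\Gamma,\Theta$ themselves.

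Finally, $\psi$ respects the restricted Lie structure because the bracket and $[p]$-operation on $W(2;\uone)$ preserve its $\Z$-grading: for $\xi\in L_{(m)}$, $\eta\in L_{(n)}$ one has $[\xi,\eta]_{m+n}=[\xi_m,\eta_n]$, and for $\xi\in L_{(n)}$ with $n\geq 1$ one has $(\xi^{[p]})_{pn}=(\xi_n)^{[p]}$ (since the $[p]$-mapping on derivations is the $p$-fold composition). The principal technical obstacle is verifying $L_{(2p-3)}=0$ cleanly, as this requires the careful accounting above of how the non-homogeneous pieces $\boD_\Lambda(x),\boD_\Lambda(y)$ contribute to $L$; once this is settled, the remainder is routine unwinding of definitions.
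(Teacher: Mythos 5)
Your argument is correct, and the isomorphism you build is in fact the very map $\psi$ of \eqref{eq:defpsi}: by \eqref{eq:monomD} the lowest-degree homogeneous $W(2;\uone)$-component of $\boD_\Lambda(x^ay^b)$ equals $\boD(x^ay^b)$ in every case, including $(a,b)=(1,0),(0,1)$, so your symbol map sends $\boD_\Lambda(x^ay^b)+L_{(a+b-1)}$ to $\boD(x^ay^b)$ and fixes $\Gamma,\Theta$. What differs is the route of verification. The paper quotes Strade's explicit description of the subspaces $H_{(n)}$ and $L_{(n)}$ (\cite[Proposition 1.2(1)]{Str2}, reproduced as \eqref{eq:filH}), reads off depth and height from it, and then checks that $\psi$ is an isomorphism of graded (restricted) Lie algebras by comparing the structure constants \eqref{eq:idHtau1}--\eqref{eq:idHtau8} of $L$ with the corresponding identities \eqref{eq:idH1}--\eqref{eq:idH8} in $\boY$. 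You instead note that $(L_{(n)})_{n\in\Z}$ is the trace on the $p$-subalgebra $L$ of the restricted filtration attached to the grading of $W(2;\uone)$, which yields the filtration axioms for free; you compute $L_{(2p-3)}=0$ and $L_{(-1)}\neq L_{(0)}$ directly from the basis $\{\boD_\Lambda(x^ay^b),\Gamma,\Theta\}$ together with \eqref{eq:monomD}; and you realize $\psi$ as the leading-term map, which is automatically compatible with the bracket and the $p$-th power of derivations, so that no identity-checking is needed. This is more self-contained (no appeal to \eqref{eq:filH}) and conceptually cleaner, at the cost of the explicit bookkeeping needed to see that the only inhomogeneous basis elements are $\boD_\Lambda(x)$ and $\boD_\Lambda(y)$. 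One small point: you assert the compatibility $(\xi^{[p]})_{pn}=(\xi_n)^{[p]}$ only for $n\ge 1$, but it is also needed for $n=0$ to get an isomorphism of \emph{restricted} graded Lie algebras on $\gr_+(L)$; it holds there by the same degree count, since all cross terms in $\xi^p$ involve a factor of positive degree, while for $n=-1$ both $p$-maps land in the vanishing component of degree $-p$, so nothing is to be checked.
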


\begin{proof}
According to \cite[Proposition~1.2(1)]{Str2}, one has that $L=L_{(-1)}$, $H=H_{(-1)}$, $H_{(n)}=L_{(n)}
=0$ for $n>2p-4$, and for $-1\leq n\leq 2p-4$:
\begin{equation}\label{eq:filH}
\begin{aligned}
H_{(n)}&=\F\boD_\Lambda(x^{p-1}y^{p-1})+\langle\{\boD_\Lambda(x^ay^b)\mid n+2\leq a+b
\leq 2p-3\}\rangle_\F\,,\\
L_{(n)}&=
\begin{cases}
H_{(n)}\oplus\F\Gamma\oplus\F\Theta&\ \text{for $-1\leq n\leq p-2$\,,}\\
\hfil\hfil H_{(n)}\hfil&\ \text{for $p-2<n\leq 2p-4$\,.}
\end{cases}
\end{aligned}
\end{equation}
From the identities \eqref{eq:idHtau6}--\eqref{eq:idHtau8} one obtains that $L_{(n)}^{[p]}\subseteq
L_{(pn)}$ for every integer $n\geq 0$. In the associated graded algebra $\gr(L)$ one has the additional
identity (see \eqref{eq:idHtau5})
\begin{equation}\label{eq:Yid}
[\boD_\Lambda(x)+L_{(0)},\boD_\Lambda(y)+L_{(0)}]=0\,.
\end{equation}
Hence using \eqref{eq:Yid} and the identities \eqref{eq:idHtau1}--\eqref{eq:idHtau5} one concludes
that the linear map $\psi\colon\gr(L)\to\boY$ given by
\begin{equation}\label{eq:defpsi}
\begin{aligned}
\psi(\boD_\Lambda(x^ay^b)+L_{(a+b-1)})&:=\boD(x^ay^b)\,,\\ 
\psi(\Gamma+L_{(p-1)})&:=\Gamma\,,\\
\psi(\Theta+L_{(p-1)})&:=\Theta\,,
\end{aligned}
\end{equation}
is an isomorphism of graded Lie algebras. The identities \eqref{eq:idHtau6}--\eqref{eq:idHtau8} imply
that the mapping $\psi\vert_{\gr_+(L)}\colon\gr_+(L)\to\boY$ is a homomorphism of restricted Lie
algebras. Thus, as $\boY$ is a graded restricted Lie algebra, the filtration $(L_{(n)})_{n\in\Z}$ is
restricted.
\end{proof}

It is well known that $L_{(0)}/L_{(1)}\cong\boY_0\cong\fsl_2(\F)$ (see \cite[Proposition 1.2(2.b)]{Str2}
and the remark after Proposition~\ref{coker}). For $\lambda\in\{0,\dots,p-1\}$ let $V(\lambda)$
denote the irreducible restricted $\fsl_2(\F)$-module of highest weight $\lambda$, i.e., $\dim_\F
V(\lambda)=\lambda+1$. By $V(\lambda)$ we will also denote its inflation to $L_{(0)}$. The following
result will be important for the proof of the main result in the next section.

\begin{thm}\label{irr}
Let $\F$ be an algebraically closed field of characteristic $p>3$. Then the restricted
$L$-module $M(\lambda):=\ind_{L_{(0)}}^L(V(\lambda),0)$ is irreducible for every
$\lambda\in\{2,\dots,p-1\}$.
\end{thm}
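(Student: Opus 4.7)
The plan is to reduce the irreducibility of $M(\lambda)$ to the corresponding statement for the associated graded algebra and then invoke a known irreducibility result for Kac-type modules over the graded Hamiltonian Lie algebra. First I would apply Proposition \ref{filt}, which provides the descending restricted filtration $(L_{(n)})_{n\in\Z}$ of depth $1$ and identifies $\gr(L)$ with the graded restricted Lie algebra $\boY$. Under this identification, $\gr_+(L)$ corresponds to $\boY_+ := \bigoplus_{n\geq 0}\boY_n$, and $\gr_0(L) \cong \boY_0 \cong \fsl_2(\F)$ by the remark after Proposition \ref{coker}. Since $V(\lambda)$ is inflated from $\fsl_2(\F)$, it is naturally both a restricted $L_{(0)}$-module and a restricted $\gr_+(L)$-module, with matching action of the degree-zero component. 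Theorem \ref{grind} therefore reduces the problem to showing that $\ind_{\boY_+}^{\boY}(V(\lambda),0)$ is irreducible.

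For the graded statement, I would pass further down to the graded simple Hamiltonian Lie algebra $H(2;\uone)^{(2)}$ and cite the classical irreducibility theorem for induced modules over $H(2;\uone)^{(2)}$ (due to Shen, Nakano, or Koreshkov, depending on the normalization): for $\lambda \in \{2,\dots,p-1\}$ the Kac-type induced module $\ind_{H(2;\uone)^{(2)}_+}^{H(2;\uone)^{(2)}}(V(\lambda),0)$ is an irreducible $H(2;\uone)^{(2)}$-module. By Proposition \ref{coker}, the cokernel of the inclusion $H(2;\uone)^{(2)} \hookrightarrow \boY$ is concentrated in degrees $p-2$ and $2p-4$, both strictly positive. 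In particular the degree $-1$ components of $H(2;\uone)^{(2)}$ and $\boY$ coincide, so Jacobson's analogue of the Poincar\'e--Birkhoff--Witt theorem (see \cite[Theorem 2.5.1(2)]{SF}) implies that $\ind_{H(2;\uone)^{(2)}_+}^{H(2;\uone)^{(2)}}(V(\lambda),0)$ and $\ind_{\boY_+}^{\boY}(V(\lambda),0)$ have identical underlying vector spaces, with the same action of the negative part. Since any $\boY$-submodule is in particular an $H(2;\uone)^{(2)}$-submodule, irreducibility over the subalgebra lifts to irreducibility over $\boY$, completing the chain of reductions.

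The main obstacle is the sharpness of the weight bound $\lambda \geq 2$ in the graded irreducibility statement. The values $\lambda = 0, 1$ must be genuinely excluded (for $\lambda = 0$ the trivial module is a quotient, and for $\lambda = 1$ one encounters a nontrivial submodule coming from the action of the top-degree component), so care is needed in pinning down the precise reference and verifying that its hypotheses match those in force here (characteristic $p>3$, algebraically closed base field, restricted representation of $\fsl_2(\F)$). Once this graded result is in hand, the argument is essentially an assembly of Proposition \ref{filt}, Proposition \ref{coker}, and Theorem \ref{grind}, with no further delicate computation required.
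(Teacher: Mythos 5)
Your proposal is correct and follows essentially the same route as the paper: Proposition \ref{filt} plus Theorem \ref{grind} reduce the claim to irreducibility of $\ind_{\boY_{(0)}}^{\boY}(V(\lambda),0)$, and Proposition \ref{coker} identifies this (as a module over the simple graded subalgebra $\boX=H(2;\uone)^{(2)}$) with the induced $\boX$-module, whose irreducibility for $\lambda\neq 0,1$ is quoted from the literature — the paper cites Holmes, and notes in a remark that Shen's result gives it as well, matching your suggested references.
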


\begin{proof}
By virtue of Proposition \ref{filt}, $(L_{(n)})_{n\in\Z}$ is a restricted filtration of $L$ and
$\gr(L)\cong\boY$. Put $\boX:=H(2;\uone)^{(2)}\subseteq\boY$, and set $\boX_{(n)}:=
\boX\cap\boY_{(n)}$ for any $n\in\Z$. According to  Proposition \ref{coker}, $\coker
(\boX\to\boY)$ is concentrated in degrees $p-2$ and $2p-4$. Hence the canonical map
\begin{equation}\label{eq:Reps1}
\ind_{\boX_{(0)}}^{\boX}(V(\lambda)_{\vert\boX_{(0)}},0)\longrightarrow
\ind_{\boY_{(0)}}^{\boY}(V(\lambda),0)_{\vert\boX}
\end{equation}
is an isomorphism. As $\ind_{\boX_{(0)}}^{\boX}(V(\lambda)_{\vert\boX_{(0)}},0)$ is
an irreducible restricted $\boX$-module for $\lambda\neq 0,1$ (see \cite[p.\ 253]{Hol}),
this implies that $\ind_{\boY_{(0)}}^{\boY}(V(\lambda),0)$ is an irreducible $\boY$-module.
Consequently, by Theorem \ref{grind}, $M(\lambda)$ is an irreducible $L$-module unless
$\lambda=0,1$.
\end{proof}

\noindent {\bf Remark.} Since the induced modules in Theorem \ref{irr} coincide with
the corresponding coinduced modules, and as Shen's mixed products are isomorphic to
coinduced modules (see \cite[Corollary 2.6]{Fa1} and \cite[Proposition 1.5(1)]{FS}),
the irreducibility of $\ind_{\boX_{(0)}}^{\boX}(V(\lambda)_{\vert\boX_{(0)}},0)$ in
the proof of Theorem \ref{irr} follows also from \cite[Proposition~1.2]{S}.


\section{Restricted Lie algebras with maximal $0$-PIM}


Our goal in this section is to prove that for fields of characteristic $p>3$ finite-dimensional
restricted Lie algebras having maximal $0$-PIM are necessarily solvable. The following
criterion for a restricted Lie algebra having maximal $0$-PIM will be very useful in the
proof of this result. For a Lie algebra $L$ and any $L$-module $M$ we denote by $M^L
:=\{m\in M\mid\forall\,x\in L:x\cdot m=0\}$ the {\it space of $L$-invariants\/} of $M$.

\begin{lem}\label{inv}
Let $L$ be a finite-dimensional restricted Lie algebra over a field of prime
characteristic, and let $T $ be a torus in $L$ of maximal dimension. Then $L$
has maximal $0$-PIM if, and only if, $S^T=0$ for every non-trivial irreducible
restricted $L$-module $S$.
\end{lem}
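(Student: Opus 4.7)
The plan is to decompose the projective module $\ind_T^L(\F, 0)$ into projective indecomposable summands and determine when the trivial PIM is the only summand.

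First, I would observe that since $T$ is a torus, $u(T, 0)$ is semisimple (cf.\ \cite[Lemma 3.2]{Fe3}), so the trivial $T$-module $\F$ is projective, and because induction preserves projectivity and $\dim_\F T = \mt(L)$, the module $P := \ind_T^L(\F, 0)$ is a projective $u(L, 0)$-module of dimension $p^{\dim_\F L - \mt(L)}$ by \cite[Proposition 5.6.2]{SF}. By Proposition \ref{upbd}, $P_L(\F)$ is already a direct summand of $P$, and therefore $L$ has maximal $0$-PIM if, and only if, $P_L(\F) = P$.

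Next, I would use Krull--Remak--Schmidt to write
$$P \cong \bigoplus_{[S] \in \irr(L, 0)} P_L(S)^{\oplus m_S},$$
and apply Frobenius reciprocity (the induction-restriction adjunction for reduced enveloping algebras, analogous to \cite[Theorem 5.6.3]{SF}) to identify the multiplicities: one has
$$\Hom_L(\ind_T^L(\F, 0), S) \cong \Hom_T(\F, S_{\vert T}) = S^T,$$
so $m_S>0$ exactly when $S^T \neq 0$. Taking $S = \F$ gives $\Hom_L(P, \F) \cong \F$, forcing $m_\F = 1$, whereas for every non-trivial irreducible $S$ the vanishing of $m_S$ is equivalent to the vanishing of $S^T$. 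Hence $P = P_L(\F)$ if, and only if, $S^T = 0$ for every non-trivial irreducible restricted $L$-module $S$, which together with the first reduction yields the claimed equivalence.

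The only technical ingredient is the Frobenius reciprocity identity $\Hom_L(\ind_T^L(\F, 0), S) \cong S^T$; everything else is routine bookkeeping with projective covers and the Krull--Remak--Schmidt decomposition of projective $u(L, 0)$-modules. I do not anticipate any real obstacle, the argument being essentially a clean unpacking of the adjunction between induction from a (semisimple) torus and restriction.
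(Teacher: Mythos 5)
Your argument is correct and follows essentially the same route as the paper: both proofs rest on Frobenius reciprocity $\Hom_L(\ind_T^L(\F,0),S)\cong S^T$ together with the fact (Proposition \ref{upbd}) that $P_L(\F)$ is a direct summand of $\ind_T^L(\F,0)$. The only cosmetic difference is that the paper establishes the converse by showing the head of $\ind_T^L(\F,0)$ is one-dimensional (hence the module is indecomposable), whereas you read off the same conclusion from the multiplicities in the Krull--Remak--Schmidt decomposition.
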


\begin{proof}
Suppose first that $L$ has maximal $0$-PIM. Then it follows from Corollary~\ref{proj}
that $P_L(\F)\cong\ind_T^L(\F,0)$.  Since $\Hom_L(P_L(\F),S)=0$ for every
non-trivial irreducible restricted $L$-module $S$, Frobenius reciprocity yields:
\begin{equation}
\label{eq:inv}
S^T\cong\Hom_T(\F,S_{\vert T})\cong\Hom_L(\ind_T^L
(\F,0),S)\cong\Hom_L(P_L(\F),S)=0
\end{equation}
for every non-trivial irreducible restricted $L$-module $S$.

Conversely, suppose that $S^T=0$ for every non-trivial irreducible restricted
$L$-module $S$. Reading \eqref{eq:inv} backwards shows that the head of
$\ind_T^L(\F,0)$ is a trivial $L$-module. On the other hand, applying Frobenius
reciprocity to the one-dimensional trivial irreducible $L$-module, one has $\Hom_L
(\ind_T^L(\F,0),\F)\cong\Hom_T(\F,\F)\cong\F$. This implies that the head of
$\ind_T^L(\F,0)$ is one-dimensional, and therefore $\ind_T^L(\F,0)$ is
indecomposable. By virtue of Proposition \ref{upbd}, $P_L(\F)$ is a direct
summand of $\ind_T^L(\F,0)$. As a consequence one obtains that $P_L(\F)
\cong\ind_T^L(\F,0)$. Hence $L$ has maximal $0$-PIM.
\end{proof}

As we could not find a reference for the following result, which will be needed in
the proof of Theorem \ref{max0PIM}, it is included here for completeness.

\begin{lem}\label{psim}
Let $L$ be a non-abelian restricted Lie algebra over a field of prime characteristic
$p$ with no non-zero proper $p$-ideals. Then the following statements hold:
\begin{enumerate}
\item[(1)] $L$ is a minimal $p$-envelope of $[L,L]$.
\item[(2)] $[L,L]$ is simple as an ordinary Lie algebra.
\item[(3)] $[L,L]$ is a non-trivial irreducible $L$-module.
\end{enumerate}
\end{lem}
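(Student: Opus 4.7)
The plan is to prove (1), (3), (2) in this order after a common preliminary computation. Set $K := [L,L]$; since $L$ is non-abelian, $K \neq 0$, and the $p$-closure $\langle K\rangle_p$ is a non-zero $p$-ideal of $L$, hence equals $L$ by hypothesis. For any Lie ideal $I$ of $L$ with $I \subseteq K$ and any $x, y \in I$, $n, m \geq 0$, the key bracket identity
\[ [x^{[p^n]}, y^{[p^m]}] = \ad(x)^{p^n}(y^{[p^m]}) \in I \]
holds: when $m \geq 1$ one writes $[x, y^{[p^m]}] = -\ad(y)^{p^m}(x) \in I$ by $L$-stability of $I$, and iterating $\ad(x)$ keeps the result in $I$; the case $m=0$ is immediate. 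Combining this with iterated Jacobson's formula---which reduces $(a+b)^{[p]}$ to $a^{[p]}+b^{[p]}$ modulo Lie brackets that, for $a,b$ in such an $I$, land in $I$---shows that the $\F$-linear span of $\{x^{[p^n]} \mid x \in I,\, n \geq 0\}$ is a $p$-subalgebra of $L$ containing $I$, hence equals $\langle I\rangle_p$. Taking $I = K$ yields that $L$ is the $\F$-linear span of $\{k^{[p^n]} \mid k \in K,\, n \geq 0\}$. Finally, $C := C_L(K)$ is a $p$-ideal of $L$ (an ideal because $K$ is, and $p$-closed because $\ad(x^{[p]}) = \ad(x)^p$), so $C \in \{0, L\}$; but $C = L$ would force $K \subseteq Z(L)$ and then $L = \langle K\rangle_p \subseteq Z(L)$ (as $Z(L)$ is $p$-closed), contradicting non-abelianness. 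Hence $C_L(K) = 0$, which together with $L = \langle K\rangle_p$ means that $L$ is a minimal $p$-envelope of $K$ in the sense of \cite[\S 2.5]{SF}; this proves (1).

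For (3), let $I$ be a non-zero $L$-submodule of $K$, i.e., a non-zero Lie ideal of $L$ contained in $K$. Then $\langle I\rangle_p$ is a non-zero $p$-ideal, hence $\langle I\rangle_p = L$, and the spanning statement applied to $I$ combined with the displayed identity yields $[L,L] \subseteq I$, whence $K \subseteq I$ and $I = K$. The module $K$ is non-trivial because trivial $L$-action would give $L \subseteq C_L(K) = 0$.

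For (2), let $J$ be a non-zero Lie ideal of $K$. For any $k \in K$ and $n \geq 0$, the operator $\ad(k)^{p^n}$ preserves $J$ since $\ad(k)$ does. As $L$ is spanned by the elements $k^{[p^n]}$, this gives $[L, J] \subseteq J$, so $J$ is an $L$-submodule of $K$, and (3) forces $J = K$. The main obstacle is establishing the spanning description of $L$ and the bracket identity above; once these are in hand, the three parts follow almost mechanically.
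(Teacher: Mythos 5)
Your proof is correct and follows essentially the same route as the paper: both rest on $\langle[L,L]\rangle_p=L$, on the identity $\ad(x^{[p^n]})=\ad(x)^{p^n}$ to promote ideals of $[L,L]$ to ideals of $L$ and to force $[L,L]\subseteq I$, and on a vanishing-centre argument for minimality of the $p$-envelope; the paper simply cites \cite[Proposition 2.1.3]{SF} and \cite[Theorem 2.5.8(3)]{SF} where you re-derive the spanning description of $\langle I\rangle_p$ by hand. The only cosmetic differences are that you prove the parts in the order (1), (3), (2) and use $C_L([L,L])=0$ in place of the paper's $C(L)=0$ --- a slightly stronger fact which also silently supplies the non-abelianness of $[L,L]$ needed for ``simple'' in (2).
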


\begin{proof}
(1): Consider the $p$-ideal $\langle[L,L]\rangle_p$ of $L$ (cf.\ \cite[Proposition 2.1.3(4)]{SF}).
Then $0\neq [L,L]\subseteq\langle[L,L]\rangle_p$, and thus by assumption $\langle[L,L]
\rangle_p=L$. Accordingly, $L$ is a $p$-envelope of $[L,L]$. As the center $C(L)$ of $L$ is a
$p$-ideal of $L$ and $L$ is not abelian, it follows by assumption that $C(L)=0\subseteq
[L,L]$. Hence \cite[Theorem 2.5.8(3)]{SF} implies that $L$ is a minimal $p$-envelope of
$[L,L]$.

(2): Suppose that $I$ is a non-zero ideal of $[L,L]$. By virtue of (1) and \cite[Proposition
2.1.3(1)]{SF}, we have $$[L,I]=[\langle[L,L]\rangle_p,I]=[\sum_{n\in\Z_{\ge 0}}\langle
[L,L]^{[p]^n}\rangle_\F,I]=\sum_{n\in\Z_{\ge 0}}\ad([L,L])^{p^n}(I)\subseteq I\,,$$
i.e., $I$ is an ideal of $L$. Then by \cite[Proposition 2.1.3(4)]{SF} $\langle I\rangle_p$
is a non-zero $p$-ideal of $L$, and therefore by assumption $\langle I\rangle_p=L$.
Applying \cite[Proposition~2.1.3(2)]{SF} yields $$[L,L]=[\langle I\rangle_p,\langle
I\rangle_p]=[I,I]\subseteq I\,.$$ Consequently, $I=[L,L]$, and thus $[L,L]$ is simple as
an ordinary Lie algebra.

(3): The proof of (2) also shows that $[L,L]$ is an irreducible $L$-module which is non-trivial
as $C(L)=0$.
\end{proof}

\begin{thm}\label{max0PIM}
Let $L$ be a finite-dimensional restricted Lie algebra over a field of prime characteristic
$p>3$. Then $L$ has maximal $0$-PIM if, and only if, $L$ is solvable.
\end{thm}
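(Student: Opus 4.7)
The reverse implication is already Theorem \ref{solvmax0PIM}, so the plan focuses on the forward direction: if $L$ has maximal $0$-PIM, then $L$ is solvable. The approach is a double reduction followed by a case analysis based on the classification of simple Lie algebras of small absolute toral rank. Since the max-$0$-PIM condition and solvability are both preserved under scalar extension, I would first assume $\F$ algebraically closed and then induct on $\dim_\F L$. If $L$ admits a nonzero proper $p$-ideal $I$, Lemma \ref{ext} transports maximal $0$-PIM to both $I$ and $L/I$, so by the inductive hypothesis both are solvable and hence $L$ is solvable. This reduces the problem to the case in which $L$ has no nonzero proper $p$-ideal; the abelian case is trivial, so I may also assume $L$ is nonabelian.

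Under these hypotheses, Lemma \ref{psim} supplies the structural skeleton: $K := [L,L]$ is simple as an ordinary Lie algebra, $L$ is a minimal $p$-envelope of $K$, and $K$ is a nontrivial irreducible restricted $L$-module via the adjoint representation. Fixing a maximal torus $T$ of $L$, Lemma \ref{inv} turns the max-$0$-PIM hypothesis into the requirement that $S^T = 0$ for \emph{every} nontrivial irreducible restricted $L$-module $S$. The plan is now to contradict this by exhibiting such a module $S$ with $S^T \ne 0$.

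Here the classification of finite-dimensional simple Lie algebras of absolute toral rank at most two over $\F$ in characteristic $p>3$ due to Strade \cite{StrII} comes in. In all classical cases and in the ``graded'' Cartan-type cases of the classification, the adjoint module $S = K$ already yields the required contradiction: a Cartan subalgebra of $K$ lies in $C_K(T) = K^T$, which is therefore nonzero. Simple $K$ of absolute toral rank exceeding two can be reduced to the rank-two case by passing to a suitable $T$-stable simple section, so the classification really does come down to the rank-two list. The main obstacle — and the reason for developing Sections 4 and 5 — is the non-graded Hamiltonian case $K = H(2;\underline{1};\Phi(\tau))^{(1)}$, with $L$ its minimal $p$-envelope as described in Section 5. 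For this one, $K^T$ does not furnish the contradiction directly, and instead Theorem \ref{irr} is invoked: the induced modules $M(\lambda) = \ind_{L_{(0)}}^L(V(\lambda),0)$ for $\lambda \in \{2,\dots,p-1\}$ are nontrivial irreducible restricted $L$-modules, and a direct weight-space analysis (using that $V(\lambda)$ carries a zero $\mathfrak{sl}_2$-weight for even $\lambda$ and that the $\mathfrak{sl}_2$-torus of $L_{(0)}/L_{(1)}$ lies in the maximal torus $T$ of $L$) exhibits an even $\lambda$ for which $M(\lambda)^T \ne 0$. This contradicts Lemma \ref{inv} and completes the proof.
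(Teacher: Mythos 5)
Your overall architecture matches the paper's: reduce to an algebraically closed field, take a minimal non-solvable counterexample, dispose of the case of a nonzero proper $p$-ideal via Lemma \ref{ext}, invoke Lemmas \ref{psim} and \ref{inv} to reduce to producing a non-trivial irreducible restricted $L$-module $S$ with $S^T\neq 0$, and then run through Strade's classification, isolating $H(2;\uone;\Phi(\tau))^{(1)}$ as the case needing Theorem \ref{irr}. However, your mechanism for that crucial last case rests on a false premise. You claim that ``the $\fsl_2$-torus of $L_{(0)}/L_{(1)}$ lies in the maximal torus $T$ of $L$'' and propose to find invariants via the zero weight of $V(\lambda)$ for even $\lambda$. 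In fact the opposite holds: every nonzero torus of $L_{(0)}$ is one-dimensional and $0$ is not a weight of such a torus on $\gr_{-1}(L)\cong V(1)$, which forces $L_{(0)}\cap T=\{0\}$ for any two-dimensional torus $T$, whence $L=L_{(0)}\oplus T$. The correct route is the one this decomposition opens up: by the PBW theorem $M(2)=\ind_{L_{(0)}}^L(V(2),0)$ is then a \emph{free} $u(T)$-module, so for a toral basis $\{t_1,t_2\}$ of $T$ the subspace $(1-t_1^{p-1})(1-t_2^{p-1})M(2)$ is nonzero and annihilated by $T$, giving $M(2)^T\neq 0$. Your proposed weight-space analysis, as stated, cannot even be set up, because $T$ does not project onto (or contain a lift of) the torus of $\gr_0(L)\cong\fsl_2(\F)$.

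Two further steps are stated too loosely to carry the argument. First, the reduction of absolute toral rank $>2$ to rank $2$ is not achieved by ``passing to a $T$-stable simple section'' of $K$; the paper instead shows that every $2$-section $H$ of $L$ with respect to a maximal torus inherits maximal $0$-PIM (via Proposition \ref{ind} and a dimension count against Proposition \ref{upbd}), so that minimality forces either $H=L$ or $H$ solvable, and if all $2$-sections are solvable then $L$ is solvable by the $2$-section solvability criterion; you need some such transfer of the max-$0$-PIM property, which your sketch omits. (The toral rank one case also needs a separate argument, e.g.\ the Jordan--Chevalley--Seligman--Schue decomposition of a non-$\ad$-nilpotent element of $[L,L]$.) Second, your blanket claim that in all graded Cartan-type cases ``a Cartan subalgebra of $K$ lies in $K^T$'' is not automatic when $K$ is not restricted, since $T$ need not meet $K$; for $W(1;2)$, $H(2;(1,2))^{(2)}$, and $H(2;\uone;\Phi(1))$ one must verify $K\cap T\neq 0$ by dimension counts or by citing the relevant results in \cite{StrII}, and the very case $H(2;\uone;\Phi(\tau))^{(1)}$ shows the claim can fail outright, since there $K^T=0$ for every maximal torus.
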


\begin{proof}
One implication is just Theorem \ref{solvmax0PIM}, so that only the converse has to be
proved.

Let $\overline{\F}$ denote an algebraic closure of the ground field $\F$ of $L$, and set
$\overline{L}:=\overline{\F}\otimes_\F L$. As the trivial irreducible $L$-module $\F$ is
absolutely irreducible, we have that $$P_{\overline{L}}(\overline{\F})\cong\overline{\F}
\otimes_\F P_L(\F)\,.$$ Hence we may assume that the ground field of $L$ is algebraically
closed.

Suppose that there exists a non-solvable restricted Lie algebra $L$ with maximal $0$-PIM.
Furthermore, we may assume that $L$ has minimal dimension, i.e., every proper $p$-subalgebra
of $L$ is either solvable or does not have maximal $0$-PIM.

Suppose now in addition that $L$ has a non-zero proper $p$-ideal $I$. According to Lemma
\ref{ext}, $I$ and $L/I$ have maximal $0$-PIM. By the minimality of $L$, they both must be
solvable, but then also $L$ would be solvable which is a contradiction. So for the remainder
of the proof we may assume that $L$ has no non-zero proper $p$-ideal. Hence it follows
from Lemma \ref{psim} that the derived subalgebra $[L,L]$ of $L$ is a simple Lie algebra and
that $L$ is a minimal $p$-envelope of $[L,L]$.

We first discuss the case that $[L,L]$ has absolute toral rank one. By Engel's theorem there
exists an element $x$ of $[L,L]$ such that $\ad x$ is not nilpotent on $[L,L]$. Consider the
Jordan-Chevalley-Seligman-Schue decomposition $x=x_s+x_n$ of $x$ in the restricted Lie
algebra $L$, where $x_s$ and $x_n$ denote the semisimple part and the $p$-nilpotent part
of $x$, respectively (see \cite[Theorem 2.3.5]{SF}). Then $T:=\langle x_s\rangle_p$
is a torus of $L$ of maximal dimension and $[L,L]^T\supseteq\F x\neq 0$. By virtue of
Lemma~\ref{psim}(3), $[L,L]$ is a non-trivial irreducible restricted $L$-module. Hence we
conclude from Lemma \ref{inv} that $L$ does not have maximal $0$-PIM, which is a contradiction.

Let $H$ be any $2$-section of $L$ with respect to a torus $T$ of $L$ of maximal dimension
(cf.\ \cite[Definition 1.3.9]{StrI}). As $T\subseteq H$, we have $\mt(L)=\mt(H)$. Moreover,
it follows from Proposition \ref{ind} that $P_L(\F)$ is a direct summand of $\ind_H^L(P_H(\F),
0)$ and therefore \cite[Proposition 5.6.2]{SF} yields: $$\dim_\F P_H(\F)\ge\frac{\dim_\F P_L
(\F)}{p^{\dim_\F L-\dim_\F H}}=p^{\dim_\F H-\mt(H)}\,.$$ Hence by Proposition \ref{upbd},
$H$ has maximal $0$-PIM, and the minimality of $L$ implies that either $L=H$ or $H$ is solvable.
In the latter case we conclude from \cite[Theorem~1.3.10]{StrI} that $L$ must also be solvable,
which is a contradiction. Consequently, $\mt(L)=\mt(H)=2$, and thus it follows from \cite[Lemma
1.2.6(1)]{StrI} that $[L,L]$ has absolute toral rank two.

We use \cite[Theorem, p.~2]{StrII} and proceed by a case-by-case analysis. Note that the
simple Lie algebras $A_2$, $B_2$, $G_2$, $W(2;\uone)$, $S(3;\uone)^{(1)}$, $H(4;\uone
)^{(1)}$, $K(3;\uone)$, and $\mathcal{M}(1,1)$ are restricted, so that $[L,L]=L$. Hence in
all these cases one has $[L,L]^T\neq 0$ for any torus $T$ of $L$, which contradicts Lemma
\ref{inv}.

Next, consider $[L,L]\cong W(1;2)$. Suppose that $[L,L]\cap T=0$ for some two-dimensional
torus $T$ of $L$. Then it follows from \cite[Proposition 4.2.2(2)]{SF} and \cite[Theorem
7.2.2(1)]{StrI} that $\dim_{\F}([L,L]+T)>\dim_{\F}L$, which is a contradiction. Hence $$0
\neq [L,L]\cap T\subseteq [L,L]^T$$ for every two-dimensional torus $T$ of $L$, which
again contradicts Lemma \ref{inv}.

If $[L,L]\cong H(2;(1,2))^{(2)}$ or $[L,L]\cong H(2;\uone;\Phi(1))$, then it follows from
\cite[Lemma 10.2.3]{StrII} and \cite[Theorem 10.4.6(1)]{StrII}, respectively, that $$\dim_\F
[L,L]^T\ge\dim_\F([L,L]\cap T)=1$$ for every two-dimensional torus $T$ of $L$, which once
more contradicts Lemma \ref{inv}.

Thus we must have $[L,L]\cong H(2;\uone;\Phi(\tau))^{(1)}$. Let $T$ be a two-dimensional
torus in $L$. Moreover, let $(L_{(n)})_{-1\le n\le 2p-4}$ denote the descending restricted
filtration of $L$ considered in Section 5. Then $\gr_0(L)\cong\fsl_2(\F)$ (see \cite[Proposition
1.2(2.b)]{Str2}, and by \cite[Proposition 1.2(2.a)]{Str2}, $\gr_{-1}(L)$ is isomorphic as
an $L_{(0)}$-module to the module $V(1)$ inflated from the two-dimensional irreducible
$\fsl_2(\F)$-module $V(1)$. In particular, since every non-zero torus $T_0$ of $L_{(0)}$ is
one-dimensional and $0$ is not a weight of $T_0$ on $\gr_{-1}(L)\cong V(1)$, it follows
that $L_{(0)}\cap T=\{0\}$. Consequently, one concludes by comparison of dimensions
that $L=L_{(0)}\oplus T$. According to Theorem~\ref{irr}, the $L$-module $M(2):=\ind_{
L_{(0)}}^L(V(2),0)$ is irreducible. Now, by \cite[Theorem 2.3.6(1)]{SF}, there exists a
toral basis $\{t_1,t_2\}$ for $T$. As $L=L_{(0)}\oplus T$, the subspace $(1-t_1^{p-1})
(1-t_2^{p-1})M(2)$ of $M(2)$ is non-zero, and it is clear that it is annihilated by $T$. This
assures the existence of a non-trivial irreducible $L$-module $S:=M(2)$ such that $S^T
\neq 0$ which again contradicts Lemma \ref{inv}.
\end{proof}

\noindent {\bf Remark.} Note that it follows from \cite[Theorem 10.3.2(2)]{StrII} that $$[H
(2;\uone;\Phi(\tau))^{(1)}]^T=0$$ for every torus $T$ of maximal dimension in a minimal
$p$-envelope of $H(2;\uone;\Phi(\tau))^{(1)}$. This is the reason that the argument in
this case is much more involved than in the other cases.

In the case that $[L,L]\cong W(1;2)$ one can prove directly that $L$ does not have maximal
$0$-PIM. Namely, as $p>3$, it follows from \cite[Theorem 2.5.9]{N} in conjunction with
\cite[Theorem 7.2.2(1), Lemma 1.2.6(1), and Theorem 7.6.3(2)]{StrI} that $$\dim_{\F}P_L
(\F)=2p^{p^2-2}<p^{p^2-1}=p^{\dim_\F(L)-\mt(L)}\,.$$ This argument also motivated the
example at the end of this section.
\vspace{.3cm}

We conclude the paper by presenting an example of a non-solvable restricted Lie algebra in
characteristic two that has maximal $0$-PIM. In fact, the restricted Lie algebra in the example
is semisimple as an ordinary Lie algebra (see \cite[Proposition~1.4(2)]{Fe6}) and simple as a
restricted Lie algebra (see \cite[Proposition 1.4(1)]{Fe6}).
\vspace{.001cm}

\noindent {\bf Example.} Let $W(1;2)=\F e_{-1}\oplus\F e_0\oplus\F e_1\oplus\F e_2$
 be a Zassenhaus algebra over an algebraically closed field $\F$ of characteristic two.
(Here we set $e_{-1}:=\partial$ and $e_k:=x^{(k+1)}\partial$ for $k=0,1,2$, where
$\caO(1;2):=\F x^{(0)}\oplus\F x^{(1)}\oplus\F x^{(2)}\oplus\F x^{(3)}$ is a divided
power algebra in one variable $x$ over $\F$, $\partial(x^{(0)}):=0$, and $\partial(x^{(a)})
:=x^{(a-1)}$ for $1\le a\le 3$.) Then $W(1;2)^{(1)}=\F e_{-1}\oplus\F e_0\oplus\F e_1$
is a simple Lie algebra and $\fW:=\fW(1;2)=\F e_{-2}\oplus\F e_{-1}\oplus\F e_0\oplus
\F e_1\oplus\F e_2$ (with $e_{-2}:=\partial^2$) is a minimal $2$-envelope of $W(1;2)$
as well as of $W(1;2)^{(1)}$. Note that $\fW$ is the full derivation algebra of $W(1;2)^{(1)}$
(see \cite[Proposition~3.3(1)]{Str3}), and as such $\fW$ is a restricted Lie algebra.
The elements $t_\pm:=e_0+e_{\pm 1}+e_{\pm 2}$ are toral, and $\fT:=\F t_+\oplus
\F t_-$ is a two-dimensional torus of $\fW$. It follows from \cite[Theorem 7.6.3(2) and
Lemma~1.2.6(1)]{StrI} that $\fT$ is a torus of maximal dimension.

Set $\fB:=\F e_0\oplus\F e_1\oplus\F e_2$. Since $\fB$ is a $2$-subalgebra of $\fW$
with one-dimensional torus $\F e_0$ and two-dimensional $2$-unipotent radical $\F e_1
\oplus\F e_2$, $\fB$ has two isomorphism classes of irreducible restricted modules, which
can be represented by $F_\lambda:=\F1_\lambda$ ($\lambda=0,1$), where $e_0\cdot
1_\lambda:=\lambda 1_\lambda$ and $e_k\cdot 1_\lambda:=0$ for $k=1,2$ (cf.\ \cite[Lemma
2.4]{VK} or \cite[Lemma 5.8.6(2)]{SF}).

Consider the restricted baby Verma modules $Z(\lambda):=\ind_\fB^\fW(F_\lambda,0)$
for $\lambda=0,1$, and let $S$ denote any irreducible restricted $\fW$-module. Then
it follows from Frobenius reciprocity that $\Hom_\fW(Z(\lambda),S)\cong\Hom_\fB
(F_\lambda,S_{\vert\fB})\neq 0$ for some $\lambda=0$ or $\lambda=1$, and therefore
$S$ is a homomorphic image of $Z(\lambda)$ for $\lambda=0$ or $\lambda=1$. On the
other hand, one concludes from \cite[Corollary 1.5 and Lemma 1.6]{Fe} that $Z(\lambda)$
has a unique maximal submodule, and therefore $Z(\lambda)$ has a unique irreducible
factor module. Hence $\fW$ has at most two isomorphism classes of irreducible restricted
modules.

It is a consequence of \cite[Proposition 1.4(1)]{Fe6} and Lemma \ref{psim}(3) that
$W(1;2)^{(1)}=[\fW,\fW]$ is a non-trivial irreducible $\fW$-module. So $\fW$ has exactly
two isomorphism classes of irreducible restricted modules, and up to isomorphism $W
(1;2)^{(1)}$ is the only non-trivial irreducible restricted $\fW$-module. Finally, it follows
from $[W(1;2)^{(1)}]^\fT=0$ and Lemma \ref{inv} that $\fW$ has maximal $0$-PIM,
but $\fW$ is not solvable as it contains the simple subalgebra $W(1;2)^{(1)}$.
\vspace{.3cm}

In particular, by the argument in the proof of Theorem \ref{numirr} one obtains that $\vert
\irr(\fW,\chi)\vert\le 4$ for every  linear form $\chi$ on $\fW$, but one would expect
even that $\vert\irr(\fW,\chi)\vert\le 2$ as in the restricted case. Moreover, it is not
difficult to show that the projective cover of the non-trivial irreducible restricted
$\fW$-module $W(1;2)^{(1)}=[\fW,\fW]$ has the same dimension as the projective
cover of the one-dimensional trivial $\fW$-module. We will leave the details to the
interested reader and will investigate this more generally for the minimal $2$-envelope
of $W(1;n)$ on another occasion.
\vspace{.3cm}

\noindent {\bf Acknowledgements.} The first and the second author would like to
thank the Dipartimento di Matematica e Applicazioni at the Universit\`a degli Studi
di Milano-Bicocca for the hospitality during their visit in May 2013. The third author
would like to thank the Department of Mathematics and Statistics at the University
of South Alabama for the hospitality during his visit in October 2013.



\end{document}